\newtheorem{tw}{Theorem}[section]
\newtheorem{corollary}[tw]{Corollary}
\newtheorem{proposition}[tw]{Proposition}
\theoremstyle{definition}
\newtheorem{definition}[tw]{Definition}
\newtheorem{example}[tw]{Example}
\newtheorem{remark}[tw]{Remark}
\renewenvironment{proof}[1][\textbf{Proof}]{\noindent\textbf{#1.} }{\hfill\qed\vspace{2.5ex}}
\newcommand\mN{{\mathbb N}}
\newcommand\bF{\mathbf{F}} 
\newcommand\bFa{\mathbf{F}_{\ast}} 
\newcommand\bFam{\mathbf{F}_{\ast,M}} 
\newcommand\bFb{\mathbf{F}_{b}} 
\newcommand\bFs{\bF_0^2} 
\newcommand\bM{\mathbf{M}} 
\newcommand\bMf{\mathbf{M}_{\mathrm{f}}}
\newcommand\cE{{\mathcal E}}
\newcommand\sA{\mathsf{A}}
\newcommand{\rH}{\mathrm{H}}
\newcommand{\rS}{\mathrm{S}}
\newcommand\aA[2][E]{\sA(#2|#1)}
\newcommand\aAi[3][E]{\sA^{#3}(#2|#1)}
\newcommand\aAd[3][E]{\sA_{#3}(#2|#1)}
\newcommand\sbullet[1][.5]{\mathbin{\vcenter{\hbox{\scalebox{#1}{$\bullet$}}}}}
\newcommand\bA{\mathscr{A}}
\newcommand\bpA{\mathscr {A}_{\sbullet|}}
\newcommand\beA{\mathscr {A}_{|\sbullet}}
\newcommand\bpeA{\mathscr {A}_{\sbullet|\sbullet}}
\newcommand\bE{{\mathscr {E}}}
\newcommand\mub[1][\bA]{\mu_{#1}}
\newcommand{\bmu}{\boldsymbol{\mu}}
\newcommand{\bnu}{\boldsymbol{\nu}}
\newcommand\bmub[1][\bpeA]{\bmu_{#1}}
\newcommand\bnub[1][\bpeA]{\bnu_{#1}}
\renewcommand\ge{\geqslant}
\renewcommand\le{\leqslant}
\newcommand\md{\,{\mathrm{d}}}
\newcommand\intl{\int\limits}
\newcommand\agg{\mathsf{Ag}}
\newcommand{\mI}[1]{\mathbbm{1}_{#1}}
\newcommand{\noi}{\noindent}
\newcommand\st{\star}
\newcommand{\tx}{\textstyle{}}
\definecolor{darkgreen}{rgb}{0,0.75,0}
\newcommand{\mb}{\color{blue}{}}
\newcommand{\mk}{\color{red}{}}
\newcommand{\mi}{\color{darkgreen}{}}
\begin{document}
\begin{frontmatter}

\title{Generalized level measure based on a~family \\of conditional aggregation operators}

\author{Micha{\l} Boczek\corref{cor1}\fnref{label1}}
\ead{michal.boczek.1@p.lodz.pl}
\cortext[cor1]{michal.boczek.1@p.lodz.pl}

\author{Ondrej Hutn\'ik\fnref{label2}}
\ead{ondrej.hutnik@upjs.sk}

\author{Marek Kaluszka\fnref{label1}}
 \ead{marek.kaluszka@p.lodz.pl}
\author{Miriam Kleinov\'{a}\fnref{label2}}
\ead{miriam.kleinova@student.upjs.sk}

\address[label1]{Institute of Mathematics, Lodz University of Technology, 90-924 Lodz, Poland}
\address[label2]{Institute of Mathematics, Pavol Jozef \v Saf\'arik University in Ko\v sice, 040-01 Ko\v sice, Slovakia}

\begin{abstract}
Extending the concept of level measure $\mu(\{f\ge a\})$ we introduce a generalized level measure based on a~family of conditional aggregations operators.
We investigate in detail several basic properties, including connections with the family of level measures, the generalized survival function and the transformation of monotone measures to hyperset. Applications in scientometrics and information science are described.
\end{abstract}

\begin{keyword}
Conditional aggregation operator;  Monotone measure;  Level measure; Survival function; Scientometrics.
\end{keyword}

\end{frontmatter}

\section{Introduction}\label{sec:1}
This paper deals with a~study of the concept of the generalized level measure being a~generalization of the level measure  $\mu(\{x\in X\colon f(x)\ge a\})$ using in many mathematical formulas such as the Choquet integral \cite{choquet}, the Sugeno integral \cite{sug74}, the seminormed fuzzy integral \cite{BHH} and their generalizations \cite{BHH19,lucca2017,MesSt19} (see Sec.~\ref{sec:2.3} for details).
These operators have many applications, e.g., in decision-making process \cite{grabisch1996},  risk theory \cite{kal17}, or fuzzy reasoning methods \cite{lucca2017}.
The survival function, $\mu(\{x\in X\colon f(x)>a\}),$ resembles the level measure and using in the above mentioned operators it can change their value (see Example~\ref{ex:3.21} below).
In~\cite{BHHK21}, a~generalization of the survival function  based on the conditional aggregation operator has been introduced and some basic properties have been studied.
The main motivation behind its definition were real life problems.
In this paper we present the corresponding generalization of the level measure, which allows to solve other problems (see Sec.~\ref{sec:3.2}).

The paper is organized as follows.
Section~\ref{sec:2.1} provides basic notations and definitions we work with.
Next, we recall the concept of the conditional aggregation operator introduced in~\cite{BHHK21}. In Section~\ref{sec:2.3}, we present motivating examples justifying the concept of the generalized level measure -- the main topic of this work.
In Section~\ref{sec:genlev} we introduce formal definition of the generalized level measure with a~number of simple examples.
Then, we study several basic properties (Sec.~\ref{sec:3.1}), we present connections with the family of level measures  (Sec.~\ref{sec:3.2}), and the connection with the generalized survival function introduced in \cite[Sec.~4]{BHHK21} (Sec.~\ref{sec:3.3}). Applications of the introduced concept for producing new scientometric indices and transformation of monotone measure to hyperset are given in Section~\ref{sec:4}.

\section{Background and motivations}\label{sec:2}

\subsection{\textbf{Basic notations}}\label{sec:2.1}

In this paper, unless stated otherwise, $X$ is a~nonempty set and $\Sigma$ is a~$\sigma$-algebra of subsets of $X.$ Hereafter, $\Sigma^0=\Sigma\setminus \{\emptyset\}$ and  $[k]=\{1,\ldots,k\}$ for fixed $k\in\mN=\{1,2,\ldots\}.$

By $\bF$ we denote the set of all $\Sigma$-measurable nonnegative bounded functions on $X.$
For $f,g\in\bF,$ we write $f\le g$ if $f(x)\le g(x)$ for all $x\in X$ and $0_X(x)=0$ for all $x\in X.$
Moreover, $\mI{A}$ denotes the indicator function of a~set $A,$ that is, $\mI{A}(x)=1$ if $x\in A$ and $\mI{A}(x)=0$ otherwise, and $\mI{}(S)$ denotes the indicator function of a~logical sentence $S,$ that is, $\mI{}(S)=1$ if $S$ is true and $\mI{}(S)=0$ otherwise.
We adopt the standard conventions: $\tx{\inf_{x\in \emptyset} f(x)}=\infty$ and $\tx{\sup _{x\in \emptyset} f(x)=0}.$
For any $a,b\in [0,\infty],$ let $a\wedge b=\min\{a,b\}$ and $a\vee b=\max\{a,b\}.$

A~{\it monotone measure} on $\Sigma$  is a~nondecreasing set function $\mu\colon \Sigma\to [0,\infty],$ i.e., $\mu(B)\le\mu(C)$ whenever $B\subset C$ for $B,C\in\Sigma$ with $\mu(\emptyset)=0,$ and $\mu(X)>0,$
where ``$\subset$'' and ``$\subseteq$'' mean the proper inclusion and improper inclusion, respectively.
We denote the class of all monotone measures  on $\Sigma$  by $\bM.$ If $\mu\in \bM$ is such that $\mu(X)<\infty,$ then $\mu$ is \textit{finite}. For $\mu,\nu\in\bM$ we write $\mu\le \nu$ whenever $\mu(E)\le \nu(E)$ for any $E\in\Sigma.$ A~set function $\mu\colon 2^X \to [0,\infty]$  is the~\textit{counting measure} if $\mu(B)=|B|$ for any finite $B$ and $\mu(B)=\infty$ otherwise, where $|B|$ means the cardinality  of  $B.$

For $a\in [0,\infty)$ we define the \textit{$a$-level set} as $\{f\ge a\}=\{x\in X\colon f(x)\ge a\},$ where $f\in\bF.$ This concept is also known in the literature as the~weak upper level set \cite[Ch.~4]{denneberg00}, or $a$-upper level set \cite[p.~36]{durante-sempi}, or $a$-cut set  \cite[p.~15]{lee}.
The \textit{level measure} is  the function $[0,\infty)\ni a\mapsto \mu(\{f\ge a\})$ with $(f,\mu)\in\bF\times \bM$ \cite[Sec.\,2.2]{BHH4}.

\subsection{\textbf{Overview of conditional aggregation operators}}\label{sec:2.2}
A~crucial concept used in this paper is a~conditional aggregation operator introduced in~\cite{BHHK21} and applied to define new operators \cite{BHK22} covering many known nonadditive operators in the literature.

\begin{definition}\label{def:conagr}
Let $E\in \Sigma^0.$ A~map $\aA{\cdot}\colon \bF\to [0,\infty]$
is said to be a~\textit{conditional aggregation operator w.r.t. $E$} (CAO, for short) if it satisfies the following conditions:
    \begin{enumerate}[leftmargin=1.5cm,noitemsep]
        \item[$(C1)$]  $\aA{f}\le \aA{g}$ for any  $f,g\in\bF$ such that $f(x)\le g(x)$ for all $x\in E$;
        \item[$(C2)$]  $\aA{\mI{E^c}}=0,$ where $E^c=X\setminus E.$
    \end{enumerate}
The set $E$  will be called a~\textit{conditional set}.
\end{definition}

The value $\aA{f}$ can be interpreted as ``an  aggregated value of $f$ on  $E.$'' In other words, the CAO only depends on the value of the considered function defined on the conditional set. Some basic properties of CAOs are summarized in the following proposition.

\begin{proposition}(cf. \cite[Prop. 3.3]{BHHK21})\label{pro:2.2}
If $\aA{\cdot}$ is a~CAO w.r.t. $E\in \Sigma_0,$ then    $\aA{0_X}=0$ and $\aA{f}=\aA{f\mI{E}}$ for any  $f\in\bF.$
\end{proposition}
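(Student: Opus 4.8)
The plan is to extract from $(C1)$ a two-sided ``locality'' principle: if two functions coincide pointwise on the conditional set $E$, then the CAO assigns them the same value. Indeed, suppose $f(x)=g(x)$ for all $x\in E$. Then in particular $f(x)\le g(x)$ on $E$, so $(C1)$ gives $\aA{f}\le\aA{g}$; reversing the roles of $f$ and $g$ (now $g(x)\le f(x)$ on $E$) gives $\aA{g}\le\aA{f}$. Hence $\aA{f}=\aA{g}$. This observation reduces both claims to checking that the relevant functions agree on $E$.

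For the first identity $\aA{0_X}=0$, I would compare $0_X$ with the function $\mI{E^c}$. For every $x\in E$ we have $x\notin E^c$, so $\mI{E^c}(x)=0=0_X(x)$; thus $0_X$ and $\mI{E^c}$ coincide on $E$. By the locality principle above, $\aA{0_X}=\aA{\mI{E^c}}$, and the right-hand side equals $0$ by $(C2)$. This settles the first assertion.

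For the second identity, I would observe that $f\mI{E}$ agrees with $f$ on $E$: for $x\in E$ we have $\mI{E}(x)=1$, hence $(f\mI{E})(x)=f(x)$. Applying the locality principle to the pair $f$ and $f\mI{E}$ immediately yields $\aA{f}=\aA{f\mI{E}}$.

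I do not expect a genuine obstacle here; the only subtlety worth flagging is that $(C1)$ is a one-directional monotonicity hypothesis, so the equality version used throughout must be obtained by invoking $(C1)$ twice with the roles of the two functions interchanged. Once this is noted, both statements follow directly from the defining conditions $(C1)$ and $(C2)$ together with the elementary pointwise behaviour of $\mI{E^c}$ and $f\mI{E}$ on $E$.
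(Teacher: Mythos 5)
Your proof is correct and is essentially the canonical argument for this statement (the paper itself gives no proof, deferring to \cite[Prop.~3.3]{BHHK21}, where the same reasoning is used): deduce from $(C1)$, applied twice with the roles of the functions swapped, that functions agreeing on $E$ get the same value, then conclude via $(C2)$ since $0_X$ and $f\mI{E}$ agree with $\mI{E^c}$ and $f$ on $E$, respectively. The only cosmetic simplification available is that for $\aA{0_X}=0$ a single application of $(C1)$ suffices, because $\aA{0_X}\le\aA{\mI{E^c}}=0$ and the codomain $[0,\infty]$ already forces $\aA{0_X}\ge 0$.
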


Several classes of CAOs used in this paper are introduced in the following definition.

\begin{definition}\label{def:properties}
Let $\theta\in (0,\infty).$ We say that a~CAO w.r.t. $E$ is:
\begin{itemize}[noitemsep]
    \item  \textit{(super)homogeneous of degree} $\theta$  if $\aA{\lambda f}(\ge)=\lambda^{\theta} \aA{f}$  for all  $\lambda\in (0,\infty)$ and all $f\in\bF$;

    \item \textit{idempotent} if $\aA{b\mI{X}}=b$ for all $b\in (0,\infty).$


     \end{itemize}
\end{definition}

A~\textit{family of conditional aggregation operators} (FCA, for short) will be denoted by a~calligraphic letter, i.e.,
\begin{align}\label{n:fca}
    \bA=\{\aA{\cdot}\colon E\in \cE\},
\end{align}
where $\mathcal{E}$ is a~\textit{paving}, i.e., a~subfamily of $\Sigma$ such that $\emptyset \in \mathcal{E}$ (see \cite[p.~15]{denneberg00}, \cite[p.~254]{hahn},  or \cite[p.~7]{Pap1995}). Put $\cE^0=\cE\setminus\{\emptyset\}.$
In order for the FCA to be well defined, from now on we consider that each element of the FCA satisfies $\aA[\emptyset]{\cdot}=\infty,$  unless stated otherwise\footnote{such situation appears only in Section~\ref{sec:3.3}, where we adopt the convention $\aA[\emptyset]{\cdot}=0$}.
Although the family $\bA$ depends on $\cE,$ we will not indicate it in our notation when there is no doubt of confusion.  For $X=\mN$ and $\cE=\{[n]\colon n\in\mN\}\cup\{\emptyset\},$ the FCA $\bA$ generalizes the concept of aggregation operator $\agg(\cdot)$ presented by Calvo et al. \cite[Def.~1]{Calvo}.

\begin{example}\label{ex:basicCAOs}
We will frequently use the following families of CAOs  with superscript notation:
\begin{itemize}[noitemsep]
    \item \label{ref:sum} $\bA^{\textrm{sum}}=\{\aAi{\cdot}{\textrm{sum}}\colon E\in \cE\}$ with $\sA^{\textrm{sum}}(f|E)=\tx{\sum_{x\in E} f(x)}$ for $E\in\cE^0$ and $X$ is countable;

    \item \label{ref:prod} $\bA^{\textrm{prod}}=\{\sA^{\textrm{prod}}(\cdot|E)\colon E\in \cE\}$ with $\sA^{\textrm{prod}}(f|E)=\tx{\prod_{x\in E} f(x)}$ for $E\in\cE^0$ and $X$ is countable;

    \item $\bA^{\inf}=\{\aAi{\cdot}{\inf}\colon E\in \Sigma\}$ with $\aAi{\cdot}{\inf}=\tx{\inf_{x\in E} f(x)}$ for $E\in\Sigma^0$;

    \item $\bA^{\sup}=\{\aAi{\cdot}{\sup}\colon E\in \Sigma\}$ with $\aAi{f}{\sup}=\tx{\sup_{x\in E} f(x)}$ for $E\in\Sigma^0.$
\end{itemize}
Let us underline that $\mathcal{E}=\Sigma$ in $\mathscr{A}^{\inf}$ and $\bA^{\sup}.$ Moreover, in the above  families of CAOs  it is only necessary to define CAOs w.r.t. conditional sets $E\in \cE^0,$ since for  the empty set we assume (unless stated otherwise) to be equal to $\infty,$ e.g., $\aAi[\emptyset]{f}{\sup}=\infty.$
\end{example}

For some other examples of CAOs, we refer to \cite{BHHK21,BHK22}.

\subsection{\textbf{Motivations}}\label{sec:2.3}

In~\cite{BHHK21} it is introduced a~generalization of the survival function. Starting from the formula
\begin{align*}
\mu(\{f>a\})&= \mu\big(X\setminus \{x\in X\colon f(x)\leqslant a\}\big)
\\&=\inf\big\{\mu (E^c)\colon \sup_{x\in E} f(x)\le a,\, E\in \Sigma\big\},\quad a\in [0,\infty), \, f\in \bF,
\end{align*}
where $\{f>a\}=\{x\in X\colon f(x)>a\}$,
and replacing ${\tx \sup_{x\in E} f(x)}$ and $\Sigma$ with a~CAO $\sA(f|E)$ and  $\cE,$ respectively,
the generalized survival function in the form
\begin{align}\label{n30}
    \mub^S(f,a)=\inf\big\{\mu(E^c)\colon \sA(f|E)\le a,\, E\in \cE\big\}
\end{align}
is obtained (see \cite[Def. 4.1]{BHHK21}). It turns out that this modification can be used in several applications related to decision-making processes (see \cite[Sec. 2 and 6]{BHHK21}).

In this paper we propose an extension of the level measure. Observe that the level measure can be rewritten as follows
\begin{align*}
    \mu(\{f\ge a\})&=\sup\{\mu(E)\colon E\subseteq \{f\ge a\},\,E\in \Sigma\}\notag
    \\&=\sup\{\mu(E)\colon \inf_{x\in E} f(x)\ge a,\, E\in \Sigma\},\quad  a\in [0,\infty), \, f\in \bF.
\end{align*}
Putting  $\sA(f|E)$ instead of ${\tx \inf_{x\in E} f(x)}$ and  $\cE$ instead of $\Sigma,$ respectively, we get the following generalization of the level measure
\begin{align}\label{n31}
    \mub(f,a)=\sup\big\{\mu(E)\colon \aA{f}\ge a,\, E\in \cE\big\}
\end{align}
(see Section~\ref{sec:genlev} for more details).

Formulas \eqref{n30} and \eqref{n31} can be used for evaluation of both the survival function and the level measure.
Indeed, observe that if $\bA=\{\sA(\cdot|E)\colon E\in \Sigma\}$ and $\sA^{\inf}(\cdot|E)\le \aA{\cdot}\le \sA^{\sup}(\cdot|E)$ for all $E\in \Sigma^0$, then for all $a\ge 0$ and any $(f,\mu)\in \bF\times \bM$ we get the following bounds
\begin{align}\label{n4}
  \mub^S(f,a) \le \mu(\{f>a\})\le \mu(\{f\ge a\})\le \mub(f,a).
\end{align}
There exist situations in which the generalized survival function \eqref{n30} cannot be used, but the generalized level measure \eqref{n31} can help to fix them.  As a~demonstration we consider the following motivation from scientometry.

\begin{example}\label{ex:2.5}
Let $X=\mN$, $\Sigma=2^{\mathbb{N}}$ and $\mu$ be the counting measure.
A~scholar with some publications is described by a~function $f\colon \mathbb{N}\to\mathbb{N}\cup\{0\}$ called a~\textit{scientific record} such that $f(1)\ge f(2)\ge\ldots.$
The positive value of $f(i)$ gives the number of citations of scholar's $i$th publication, whereas the value $f(i)=0$ means either a~paper with zero citations or a~nonexisting paper.
Then $\mu(\{f\ge k\})$ with $k\in\mN$ returns the number of scholar's papers having at least $k$ citations each. The modified formula \eqref{n31} with $\aA{f}={\tx \inf_{x\in E} f(x)},$ i.e.,
\begin{align}\label{n5}
    \mu(f,k)=\sup\big\{\mu(E)\colon \inf_{x\in E} f(x)\ge k,\, E\in \cE\big\}
\end{align}
admits various interpretations. For instance, if $\cE=\{\emptyset,E\},$ where $E$ consists of scholar's papers published just in the last five years, then $\mu(f,k)$ defined in \eqref{n5} gives a~number of papers having at least $k$ citations among the papers from this period. This approach may provide a~fairer comparison of researchers with different professional internship (e.g., a~young scientist beginning of his/her carrier versus a~well-enabled scientist with many years in science).
Obviously one can replace $\textstyle{\inf_{x\in E} f(x)}$ in \eqref{n5} with a~CAO $\aA{f}$ to change the criterion. More examples are provided in Section~\ref{sec:4}.
\end{example}

\begin{remark}
Formula 
\eqref{n31} with $\bA=\{\sA^{\inf}(\cdot|E)\colon E\in \cE\}$ uncovers a~connection between the level measure and inner set function. Recall that the inner set function of $\mu\in \bM$ (see \cite[p.~21]{denneberg00}) is defined as follows
$\mu_\st(B)=\sup\{\mu(E)\colon E\subseteq B,\, E\in \cE\}$
for any $B\in 2^X.$ If $f\in\bF,$ then for $a\ge 0$ we can write
\begin{align*}
    \mu_\st (\{f\ge a\})&=\sup\{\mu(E)\colon E\subseteq \{f\ge a\},\,E\in \cE\}\\
    &=\sup\big\{\mu(E)\colon \inf_{x\in E} f(x)\ge a,\, E\in \cE\big\}=\mub (f,a).
\end{align*}
\end{remark}

The level measure and the survival function appear interchangeably in commonly known integrals of nonnegative functions, such as the Choquet  and the Sugeno integral. However, for some extensions of the above operators, interchanging the level measure and the survival function can change their value.
The following example illustrates such situation.

\begin{example}\label{ex:3.21}
Let $\rS$ be a~semicopula\footnote{A~binary function $\rS\colon [0,1]^2\to [0,1]$ is called a~\textit{semicopula} if it is nondecreasing and fulfils $\rS(1, a)=\rS(a, 1)=a$ for any $a\in [0,1].$} such that $\rS(x,y)=0$ if $(x,y)\in [0, 0.5)\times [0,0.5]$ and $\rS(x,y)=x\wedge y$ otherwise. Consider $f=0.5\mI{B}$ and monotone measure $\mu$ with $\mu(C)=0.5$ for any $C\neq \emptyset$ and $\mu(\emptyset)=0.$ Then the seminormed fuzzy integral and the modified seminormed fuzzy integral with the survival function of $f$ w.r.t. $\mu$ take the form:
\begin{align*}
    \sup_{a\in [0,1]} \rS\big(a,\mu(\{ f\ge a\})\big)&=\sup_{a\in [0,0.5]} \rS(a,0.5)\vee \sup_{a\in (0.5,1]} \rS(a,0)=0.5,\\
    \sup_{a\in [0,1]} \rS\big(a,\mu(\{ f> a\})\big)&=\sup_{a\in [0,0.5)} \rS(a,0.5) \vee \sup_{a\in [0.5,1]} \rS(a,0)=0.
\end{align*}
\end{example}
\bigskip

\section{Generalized level measure\label{sec:genlev}}

Inspired by stochastic processes, we consider families of concepts from Section~\ref{sec:2} such as monotone measures and  families of CAOs depending on the additional parameter $t.$
Note that a~family $\bmu=(\mu_t)_{t\in[0,1]}$ with
$\mu_t\in \{\nu\in\bM\colon \nu(X)=1\}$ is known as the~level dependent capacity on $(X, \Sigma)$, see~\cite[Def.\,3.1]{MesiarSmrek2015}.
Its special case is the~Markov kernel~\cite{Kallenberg}, where each $\mu_t$ is a~probability measure on
$(X, \Sigma)$ and for each $E\in \Sigma$, the function $[0,1]\ni t\mapsto \mu_t(E)$ is $\Sigma$-measurable.

By $\bMf$ we denote the family of monotone measures $\bmu=(\mu_t)_{t\in [0,\infty)}$ such that $\mu_t\in \bM$ for any $t.$
We write $\bmu\le\bnu$ for $\bmu,\bnu\in\bMf$ if $\mu_t(E)\le \nu_t(E)$ for all $E\in\Sigma$ and all $t.$
We say that $\bmu\in \bMf$ is:
\begin{itemize}[noitemsep]
    \item \textit{constant} if $\mu_t=\mu$ for any $t,$ where $\mu\in \bM$;
    \item \textit{nondecreasing} (resp. \textit{nonincreasing}) if $\mu_{s}\le \mu_{t}$ (resp. $\mu_{s}\ge \mu_{t}$) for any $s,t$ such that $s<t.$
\end{itemize}

From now on, we make use of $\bE=(\cE_t)_{t\in [0,\infty)}$ as a~\textit{process of pavings}, i.e., $\emptyset\in \cE_t$ with $\cE_t\subseteq \Sigma$ for any $t.$
Hereafter, $\cE_t^0=\cE_t\setminus \{\emptyset\}.$
A~process of pavings  $\bE$ is said to be:
\begin{itemize}[noitemsep]
   \item \textit{constant} if $\cE_t=\cE$ for each $t,$ where $\cE$ is a~paving;
   \item  \textit{nondecreasing} (resp. \textit{nonincreasing}) if $\cE_s\subseteq \cE_t$  (resp. $\cE_t\subseteq \cE_s$) for any $s<t$;

   \item \textit{closed under finite unions} if $E_1\cup E_2\in \cE_t$ for any $E_1,E_2\in\cE_t$ and any $t.$
\end{itemize}

For a~process  of pavings $\bE$ we define a~\textit{parametric family of conditional aggregation operators} (pFCA, for short) as follows
\begin{align}\label{f:n1}
     \bpeA=\{\aAd{\cdot}{t}\colon E\in \cE_t, \, t\ge 0\},
\end{align}
where $\sA_t(\cdot|E)\colon \bF\to [0,\infty]$ is a CAO w.r.t. $E\in \cE_t^0$ for any $t.$
We suppose that $\aAd[\emptyset]{\cdot}{t}=\infty$ for any $t$ unless stated otherwise. The formula \eqref{f:n1} is well defined as $\cE_t\subseteq \Sigma.$
To distinguish among several cases of  parametric families of CAOs used in this paper and covered by \eqref{f:n1}, we adopt the following notation (the first and the second bullet in the subscript of $\bA$ indicate dependence of the family $(\sA_t)$ and $(\cE_t)$ on parameter $t,$ respectively):

\begin{itemize}[noitemsep]
    \item  $\bpA=\{\aAd{\cdot}{t}\colon E\in \cE, \, t\ge 0\},$ i.e., $\bE$ is constant;

    \item $\beA=\{\aA{\cdot}\colon E\in \cE_t, \, t\ge 0\},$ i.e., $\aAd{\cdot}{t}=\aA{\cdot}$ for all $t$;\label{p:pfca}

    \item $\bA=\{\aA{\cdot}\colon E\in \cE\},$ i.e., FCA (see \eqref{n:fca}).
\end{itemize}
We now introduce the most important notion of this paper.

\begin{definition}\label{def:gfs}
The \textit{generalized level measure} of  $f\in\bF$ w.r.t.~a~pFCA $\bpeA$ and $\bmu\in \bMf$ is defined as follows
\begin{align}\label{f:n2}
    \bmub(f,t)&=\sup\{\mu_t(E)\colon \aAd{f}{t}\ge t,\, E\in \cE_t\},\quad t\in [0,\infty).
\end{align}
\end{definition}

\medskip

Clearly,  for any $t$, the set $\{E\in \cE_t\colon \aAd{\cdot}{t}\ge t\}$ is nonempty  as $\aAd[\emptyset]{\cdot}{t}=\infty$ and $\emptyset\in\cE_t.$
Putting a~FCA $\bA$ (instead of $\bpeA$) in \eqref{f:n2}, we get
\begin{align}\label{f:n2a}
    \bmub[\bA](f,t)&=\sup\{\mu_t(E)\colon \aA{f}\ge t,\, E\in \cE\}, \quad t\ge 0.
\end{align}
Setting a~constant $\bmu\in\bMf$ in \eqref{f:n2a}, we obtain 
\begin{align}\label{f:n2b}
    \mub(f,a)=\sup\{\mu(E)\colon \aA{f}\ge a,\, E\in \cE\}, \quad a\ge 0.
\end{align}
We now give four simple examples of the generalized level measure~\eqref{f:n2b}.

\begin{example}\label{ex:4.3}
Let $c\in (0,\infty)$ and $\cE$ be a~paving.
    \begin{enumerate}[noitemsep]
     \item[(a)]
      Let $\mu(E)=c\mI{}(E=X)$ for all $E\in \Sigma$ and $X\in \cE.$
      Then $\mu_\bA(f,a)= c\mI{}(\aA[X]{f}\ge a)$ for any $a\ge 0$ and any $f\in\bF.$

    \item[(b)] If $\mu(E)=c\mI{}(E\neq \emptyset)$ for all $E\in \Sigma,$  we have $\mu_\bA(f,a)= c\mI{}(\aA[E]{f}\ge a\text{ for some }E\in \cE^0)$, where  $a\ge 0$ and $f\in\bF.$

        \item[(c)]
    Let $X=[3]$, $\Sigma=2^{X}$ and $f=0.25\mI{\{1\}}+0.75 \mI{\{2\}}+\mI{\{3\}}$. Put $\cE=\{\emptyset,\{1\},\{2\},\{2,3\}\},$     $\bA=\{\sA^{\sup}(\cdot|E)\colon E\in \cE\}$ and $\mu(\{1\})=1$, $\mu(\{2\})=\mu(\{2,3\})=0{.}5.$  Then
    the generalized level measure \eqref{f:n2b} takes the form
  \[
    \mub(f,a)=
    \begin{cases}
    1 & \text{if } a\in[0, 0{.}25], \\
    0{.}5      & \text{if }  a\in(0{.}25,1],\\
    0     & \text{if } a>1.
    \end{cases}
    \]
    Observe that if $a\in[0,0{.}25]$, then $\tx{\mub(f,a)=\sup_{E\in \cE} \mu(E)},$ but for $a>0.25$ the equality does not hold.

    \item[(d)] If FCA $\bA$ is such that $\cE \ni E\mapsto \sA(\cdot|E)$ is nondecreasing, and $X\in\cE$ or $\cE$ is a~finite chain\footnote{$\cE=\{\{E_l,\ldots,E_1\}\colon E_l\subseteq E_2\subseteq\ldots \subseteq E_1,\, E_i\in \Sigma,\, i\in [l]\}$ for some $l\in\mN$}, then \eqref{f:n2b} takes the form
    $\mub(f,a)=\tx{\sup_{E\in \cE} \mu(E)}$ for any $a<\tx{\sup_{E\in \cE^0} \aA{f}}$  and any $(f,\mu)\in\bF\times \bM.$
\end{enumerate}
\end{example}

\subsection{\textbf{Basic properties}}\label{sec:3.1}

In this section we present some basic properties of the generalized level measure~\eqref{f:n2}.  Clearly, these properties depend on  parametric families of CAOs. Individual properties of a~CAO will be extended to a~pFCA as follows: a~pFCA $\bpeA=\{\aAd{\cdot}{t}\colon E\in \cE_t, \, t\ge 0\}$ is said to \textit{possess a~property} $\mathcal{P}$ if for any $t$ and any $E\in \cE_t^0$ the operator $\aAd{\cdot}{t}\in \bpeA$ has the property $\mathcal{P}$.
Pro\-per\-ties $\mathcal{P}$, we will mainly deal with, are the (super)homogeneity of degree $\theta$ and the idempotency,
see Definition~\ref{def:properties}.
We will also work with monotonicity of parametric families of CAOs in the following sense. We say that $\bpeA$ is:
\begin{itemize}[noitemsep]
    \item  \textit{nondecreasing} (resp. \textit{nonincreasing}) if $\bE$ is nondecreasing (resp. nonincreasing) and $\sA_{s}(f|E)\le \sA_{t}(f|E)$ for any $E\in \cE_s^0$
    (resp. $\sA_{s}(f|E)\ge \sA_{t}(f|E)$ for any  $E\in \cE_{t}^0$ and  all $0\le s<t;$

    \item \textit{nondecreasing} (resp. \textit{nonincreasing}) \textit{w.r.t. sets} if for each $f\in\bF$ and each $t,$ the inequality $\aAd[C]{f}{t}\le\aAd[D]{f}{t}$  (resp. $\aAd[C]{f}{t}\ge\aAd[D]{f}{t}$) holds for any $C,D\in \cE_t^0$ with $C\subset D$.
\end{itemize}

It is clear that the map $a\mapsto\mub(f,a)$  is nonincreasing for any $(f,\mu)\in\bF\times\bM.$ But it is no more true for the generalized level measure based on the pFCA in general.

\begin{example}
Let $\mu,$ $f$  and $\bA=\{\sA(\cdot|E)\colon E\in \cE\}$ be such that $\mu_\bA(f,b)<\mu_\bA(f,a)$ with $0<a<b.$ Let $0<c<a$ and put $\bpA=\{\sA_t(\cdot|E)\colon E\in \cE,\,t\ge 0\},$ where $\sA_c(\cdot|E)=c\sA(\cdot|E)/b$ and $\sA_a(\cdot|E)=\sA(\cdot|E).$ Suppose that $\bmu=(\mu_t)_{t\ge 0}$ is any family of monotone measures such that $\mu_c=\mu$ and $\mu\le \mu _a.$
The map  $t\mapsto \bmu_{\bpA}(f,t)$ is not nonincreasing as
\begin{align*}
\bmu_{\bpA}(f,c)&=\sup\{\mu(E)\colon \sA(f|E)\ge b,\, E\in \cE\}=\mu_{\bA}(f,b)\\
&<\mu_{\bA}(f,a)\le \bmu_{\bpA}(f,a).
\end{align*}
\end{example}

Next proposition specifies conditions under which the map $t\mapsto \bmub(f,t)$ is nonincreasing.

\begin{proposition}
For any fixed $f\in \bF,$ the function $t\mapsto \bmub(f,t)$  is nonincreasing whenever $\bpeA$ is nonincreasing pFCA and $\bmu\in\bMf$ is nonincreasing.
\end{proposition}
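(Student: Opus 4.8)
The plan is to establish directly that $\bmub(f,t)\le \bmub(f,s)$ whenever $0\le s<t$, by showing that every set admissible in the supremum defining $\bmub(f,t)$ is also admissible in the supremum defining $\bmub(f,s)$, and that replacing the index $t$ by the smaller index $s$ can only increase the measured value. In other words, the whole feasible family for the index $t$ embeds into the feasible family for the index $s$ with uniformly larger weights, which forces the two suprema into the desired order.

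First I would fix $s<t$ and take an arbitrary $E\in\cE_t$ satisfying the admissibility constraint $\aAd{f}{t}\ge t$ that appears in \eqref{f:n2}. The three monotonicity hypotheses then cooperate. Since $\bE$ is nonincreasing we have $\cE_t\subseteq\cE_s$, so $E\in\cE_s$. Since $\bpeA$ is a nonincreasing pFCA, $\aAd{f}{s}\ge\aAd{f}{t}$ for $E\in\cE_t^0$ (and this holds trivially for $E=\emptyset$ by the convention $\aAd[\emptyset]{f}{t}=\infty$), whence $\aAd{f}{s}\ge\aAd{f}{t}\ge t> s$; in particular $\aAd{f}{s}\ge s$, so $E$ is admissible in the supremum defining $\bmub(f,s)$. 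Finally, since $\bmu\in\bMf$ is nonincreasing, $\mu_t(E)\le\mu_s(E)$.

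Combining these observations, for every $E$ admissible for the index $t$ we obtain $\mu_t(E)\le\mu_s(E)\le\bmub(f,s)$, the last inequality holding because $E$ is admissible for the index $s$ and therefore $\mu_s(E)$ is one of the values over which that supremum is taken. Passing to the supremum over all such $E$ yields $\bmub(f,t)\le\bmub(f,s)$, which is exactly the asserted monotonicity.

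I do not expect any genuine obstacle here: the argument is a short monotonicity chase. The only point requiring a little care is bookkeeping the three conventions so that they pull in the same direction—lowering the parameter from $t$ to $s$ simultaneously enlarges the paving, raises the aggregation value, and raises the measure—together with the harmless handling of the empty conditional set through $\aAd[\emptyset]{f}{t}=\infty$.
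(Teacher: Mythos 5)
Your proposal is correct and follows essentially the same route as the paper's own proof: both establish that every set admissible in the supremum at level $t$ lies in $\cE_s$, remains admissible at level $s$ (since $\sA_s(f|E)\ge\sA_t(f|E)\ge t>s$), and carries a larger measure value $\mu_s(E)\ge\mu_t(E)$, whence the suprema compare as claimed. Your write-up is merely a more explicit version of the paper's compressed argument, with the added (harmless and welcome) care about the convention $\sA_t(\cdot|\emptyset)=\infty$.
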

\begin{proof}
 Let $0\le s<t.$ Since $\mu_s\ge \mu_t$ and $\sA_s(\cdot|E)\ge \sA_t(\cdot|E)$ for each $E\in \cE_t\subseteq \cE_s,$ so
\begin{align*}
    \bmub(f,s)&=\sup\big\{\mu_s(E)\colon \sA_s(f|E)\ge s,\,E\in \cE_s\big\}\\&\ge\sup\big\{\mu_t(E)\colon \sA_t(f|E)\ge t,\,E\in \cE_t\big\}= \bmub(f,t),
\end{align*}
which gives the desired result.
\end{proof}

Note that the function $t\mapsto \bmub(\cdot,t)$ need not be nondecreasing even if $\bmu$ and $\bpeA$ are nondecreasing.


Monotonicity w.r.t. functions, monotone measures and parametric families of CAOs is the result of the following proposition. Its proof is straightforward.

\begin{proposition}\label{pro:4.9}
Let $f,g\in\bF$ and $\bmu,\bnu\in\bMf.$ Then:
    \begin{itemize}[noitemsep]
    \item[(a)] $\bmub(f,t)\le \bmub(g,t)$ for  all $t$ whenever $f\le g$;

    \item[(b)] $\bmub(f,t)\le \bnub(f,t)$ for all $t$ whenever $\bmu\le\bnu$;

    \item[(c)] $\bmub(f,t)\le \bmub[\widehat{\bpeA}](f,t)$ for all $t$ whenever $\cE_t\subseteq\widehat{\cE}_t$ and $\aAd{f}{t}\le \widehat{\sA}_t(f|E)$ for all $E\in\cE_t$ and any $t,$  where  $\widehat{\bpeA}=\{\widehat{\sA}_t(\cdot|E) \colon E\in \widehat{\cE}_t,\,t\ge 0\}$ is a~pFCA.
      \end{itemize}
\end{proposition}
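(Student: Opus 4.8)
The plan is to derive all three inequalities from the single structural observation that $\bmub(f,t)$ is a supremum of the objective $E\mapsto\mu_t(E)$ taken over the constraint set $\cF_t(f):=\{E\in\cE_t\colon\aAd{f}{t}\ge t\}$. Two elementary monotonicity principles for suprema then do all the work: (i) if $\cF\subseteq\cH$ then $\sup_{E\in\cF}\phi(E)\le\sup_{E\in\cH}\phi(E)$ for any objective $\phi$; and (ii) if $\phi(E)\le\psi(E)$ for every $E$ in a common index set $\cF$, then $\sup_{E\in\cF}\phi(E)\le\sup_{E\in\cF}\psi(E)$. For each part I will identify which principle applies, so the proof reduces to checking that either the constraint set grows or the objective grows. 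All suprema are over nonempty sets, since $\emptyset\in\cE_t$ and $\aAd[\emptyset]{\cdot}{t}=\infty\ge t$ place $\emptyset$ in every $\cF_t(f)$; hence no degenerate case arises.

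For (a) and (c) I will invoke principle (i), holding the objective $\mu_t$ fixed and showing the constraint set only enlarges. In (a), assume $f\le g$. For each $t$ and each $E\in\cE_t^0$, condition $(C1)$ gives $\aAd{f}{t}\le\aAd{g}{t}$, because $f(x)\le g(x)$ holds in particular for all $x\in E$; consequently $\aAd{f}{t}\ge t$ forces $\aAd{g}{t}\ge t$, so $\cF_t(f)\subseteq\cF_t(g)$ and (i) yields $\bmub(f,t)\le\bmub(g,t)$. In (c) the objective is again $\mu_t$, the pavings satisfy $\cE_t\subseteq\widehat{\cE}_t$, and $\aAd{f}{t}\le\widehat{\sA}_t(f|E)$ for every $E\in\cE_t$; thus any $E$ feasible for $\bpeA$ lies in $\widehat{\cE}_t$ and satisfies $\widehat{\sA}_t(f|E)\ge\aAd{f}{t}\ge t$, i.e. it is feasible for $\widehat{\bpeA}$. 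The feasible set therefore enlarges and (i) gives $\bmub(f,t)\le\bmub[\widehat{\bpeA}](f,t)$.

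Part (b) is the one place where principle (ii) is used instead. Here both the pFCA and the paving are unchanged, so the constraint set $\cF_t(f)$ is literally identical on the two sides; only the objective changes from $\mu_t(E)$ to $\nu_t(E)$. Since $\bmu\le\bnu$ means $\mu_t(E)\le\nu_t(E)$ for every $E$ and every $t$, principle (ii) applied to the common index set $\cF_t(f)$ gives $\bmub(f,t)\le\bnub(f,t)$ immediately.

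I do not expect a genuine obstacle: the statement is essentially bookkeeping, and the authors' remark that the proof is straightforward is accurate. The only point demanding care is the directionality in (a) and (c), namely confirming that the hypotheses ($f\le g$ fed through $(C1)$, and $\aAd{f}{t}\le\widehat{\sA}_t(f|E)$) push the threshold constraint $\aAd{f}{t}\ge t$ in the direction that \emph{enlarges} rather than shrinks $\cF_t(f)$, so that the supremum can only increase. Once this matching of inequality directions is verified, principles (i) and (ii) close each case at once.
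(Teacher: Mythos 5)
Your proof is correct and is precisely the straightforward argument the paper alludes to when it says ``Its proof is straightforward'': in each part one either enlarges the feasible set $\{E\in\cE_t\colon \sA_t(f|E)\ge t\}$ (parts (a) and (c), via $(C1)$ and the stated domination hypotheses) or increases the objective $\mu_t(E)\le\nu_t(E)$ over the same feasible set (part (b)), and monotonicity of suprema finishes each case. Your additional care about the empty set (feasibility guaranteed by $\sA_t(\cdot|\emptyset)=\infty$ and $(C1)$ applying only to $E\in\cE_t^0$) matches the paper's conventions exactly.
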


Using Proposition~\ref{pro:2.2} we can prove the following result.

\begin{proposition}\label{prop:3.6}
For each $(f,\bmu)\in\bF\times \bMf$ and each $t\ge 0$ the following statements are true:
    \begin{enumerate}[noitemsep]
        \item[(a)] $\bmub(0_X,t)=0$  for $t>0$ and $\bmub(0_X,0)=\sup\{\mu_0(E)\colon E\in \cE_0\}$;

         \item[(b)] $\bmub(f,0)=\sup\{\mu_0(E)\colon E\in \cE_0\}$;

        \item[(c)]  $\bmub (f,t)=\mu_t(X)$ whenever $X\in\cE_t$  and $t\le \aAd[X]{f}{t}.$
    \end{enumerate}
\end{proposition}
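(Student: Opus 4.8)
The plan is to verify each of the three statements directly from the definition \eqref{f:n2} of the generalized level measure, using Proposition~\ref{pro:2.2} and the standing conventions about the empty set and the paving. The key observation throughout is that $\emptyset\in\cE_t$ and $\aAd[\emptyset]{\cdot}{t}=\infty$, so $\emptyset$ always satisfies the defining constraint $\aAd[\emptyset]{f}{t}\ge t$; hence the supremum is always taken over a nonempty index set that contains $\emptyset$, and $\mu_t(\emptyset)=0$ is always an admissible value.

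For part (a), I would split on the value of $t$. When $t>0$, I note that $\bmub(0_X,t)=\sup\{\mu_t(E)\colon \aAd{0_X}{t}\ge t,\,E\in\cE_t\}$, and by Proposition~\ref{pro:2.2} we have $\aAd{0_X}{t}=0$ for every nonempty $E\in\cE_t^0$; since $t>0$, no $E\in\cE_t^0$ can satisfy the constraint $0\ge t$, leaving $\emptyset$ as the only admissible set, so the supremum equals $\mu_t(\emptyset)=0$. When $t=0$, the constraint $\aAd{0_X}{0}\ge 0$ is satisfied by every $E\in\cE_0$ (including all nonempty ones, where the CAO value is $0\ge 0$), so the supremum runs over all of $\cE_0$ and equals $\sup\{\mu_0(E)\colon E\in\cE_0\}$.

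For part (b), the argument is even simpler and does not use Proposition~\ref{pro:2.2}: at $t=0$ the constraint reads $\aAd{f}{0}\ge 0$, which holds automatically for every $E\in\cE_0$ since every CAO takes values in $[0,\infty]$. Thus the admissible index set is all of $\cE_0$ and $\bmub(f,0)=\sup\{\mu_0(E)\colon E\in\cE_0\}$, matching the expression in (a) for the zero function. For part (c), assuming $X\in\cE_t$ and $t\le\aAd[X]{f}{t}$, the set $X$ itself satisfies the constraint, so it is admissible and contributes $\mu_t(X)$ to the supremum; since $\mu_t$ is monotone and $E\subseteq X$ for every $E\in\cE_t\subseteq\Sigma$, we have $\mu_t(E)\le\mu_t(X)$ for all admissible $E$, so $\mu_t(X)$ is in fact the maximum and $\bmub(f,t)=\mu_t(X)$.

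None of these steps presents a genuine obstacle; the proof is a routine unpacking of the definition. The only point requiring mild care is the case distinction in (a), where one must correctly invoke $\aAd{0_X}{t}=0$ on nonempty conditional sets (via Proposition~\ref{pro:2.2}) while remembering that the convention $\aAd[\emptyset]{\cdot}{t}=\infty$ keeps $\emptyset$ admissible but contributes only $0$ to the supremum, so the two subcases $t>0$ and $t=0$ genuinely differ.
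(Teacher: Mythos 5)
Your proof is correct and follows exactly the route the paper intends: the paper omits the proof as routine, remarking only that it follows ``Using Proposition~\ref{pro:2.2},'' and your argument is precisely that routine unpacking of definition~\eqref{f:n2} together with Proposition~\ref{pro:2.2} (for $\aAd{0_X}{t}=0$ on nonempty conditional sets), the convention $\aAd[\emptyset]{\cdot}{t}=\infty$ with $\mu_t(\emptyset)=0$, and monotonicity of $\mu_t$ in part (c).
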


\begin{proposition}
Let $(f,\bmu)\in\bF\times\bMf.$ Then:
\begin{itemize}[noitemsep]
    \item[(a)] for fixed $t,$ $\bmub(f,t)=0$  if and only if $\mu_t(E)=0$ for every $E\in \cE_t^0$ such that
    $\aAd{f}{t}\ge t$;

    \item[(b)] if $\sA_t(\lambda\mI{X}|E)\le\lambda$ for any    $E\in\cE^0_t$ and any $t,\lambda \ge 0,$  then there exists $b>0$ such that $\bmub(f,t)=0$ for each $t>b.$
\end{itemize}
\end{proposition}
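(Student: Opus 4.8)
The plan is to read both claims off the definition \eqref{f:n2}, exploiting that the feasible collection $\{E\in\cE_t\colon\aAd{f}{t}\ge t\}$ always contains $\emptyset$, since $\aAd[\emptyset]{f}{t}=\infty\ge t$, while $\mu_t(\emptyset)=0$. Thus $\bmub(f,t)$ is the supremum of a nonempty family of nonnegative numbers that always includes the value $0$ coming from the empty set.

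For part (a) I would argue both implications. If $\mu_t(E)=0$ for every $E\in\cE_t^0$ with $\aAd{f}{t}\ge t$, then every feasible set (the empty set included) has $\mu_t$-measure zero, so the supremum defining $\bmub(f,t)$ is $0$. Conversely, if $\bmub(f,t)=0$, then each feasible $E$ satisfies $0\le\mu_t(E)\le\bmub(f,t)=0$, so in particular every $E\in\cE_t^0$ with $\aAd{f}{t}\ge t$ has $\mu_t(E)=0$. Once the automatic feasibility of $\emptyset$ is isolated, this part is immediate.

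For part (b) the idea is that boundedness of $f$ caps the aggregated values uniformly, so that for large $t$ no nonempty conditional set stays feasible and part (a) applies with its hypothesis holding vacuously. Concretely I would set $M=\sup_{x\in X}f(x)$, finite since $f\in\bF$, and choose $b=\max\{1,M\}>0$. As $f\le M\mI{X}$ on $X$, hence on each $E$, axiom $(C1)$ gives $\aAd{f}{t}\le\aAd{M\mI{X}}{t}$, and the standing hypothesis $\sA_t(\lambda\mI{X}|E)\le\lambda$ read at $\lambda=M$ yields $\aAd{M\mI{X}}{t}\le M$ for all $E\in\cE_t^0$ and all $t$. Combining these, $\aAd{f}{t}\le M$ for every $E\in\cE_t^0$; consequently, for $t>b\ge M$ one has $\aAd{f}{t}\le M<t$, so the set $\{E\in\cE_t^0\colon\aAd{f}{t}\ge t\}$ is empty and part (a) forces $\bmub(f,t)=0$.

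I do not expect a genuine obstacle here: both parts reduce to elementary supremum bookkeeping together with $(C1)$ and the normalization hypothesis of part (b). The single point I would watch is the degenerate case $M=0$ (that is, $f=0_X$), where the naive choice $b=M$ would violate the requirement $b>0$; taking $b=\max\{1,M\}$ repairs this while still ensuring that $t>b$ implies $t>M$.
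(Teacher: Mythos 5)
Your proposal is correct and follows essentially the same route as the paper: part (a) is read directly off the definition (the paper literally says it "follows from definition"), and part (b) rests on a uniform bound for $\aAd{f}{t}$ over all $E\in\cE_t^0$ and all $t$, which makes every nonempty conditional set infeasible once $t$ is large. The only cosmetic difference is that where the paper invokes \cite[Prop.~3.8(a)]{BHHK21} to obtain $\aAd{f}{t}\le\sA^{\sup}(f|E)\le\sA^{\sup}(f|X)<\infty$, you derive the same uniform bound self-containedly from $(C1)$ together with the hypothesis applied at $\lambda=\sup_{x\in X}f(x)$, and your choice $b=\max\{1,M\}$ additionally makes the degenerate case $f=0_X$ explicit, which the paper glosses over.
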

\begin{proof} The proof of (a) follows from definition. In (b), by~\cite[Prop. 3.8\,(a)]{BHHK21}, we have $\aAd{f}{t}\le\sA^{\sup}(f|E)\le  \sA^{\sup}(f|X)<\infty$ for each $E\in\cE^0_t$ and each $t,$ so there exists $b > 0$ such that  $\aAd{f}{t}< b$ for each $E\in \cE_t^0$ and each $t>b.$ Hence, $\bmub(f,t)=0$ for each $t>b.$
\end{proof}

We now examine values of the generalized level measure for the constant function. Hereafter, $\infty\cdot 0=0.$

\begin{proposition}
If  $\bpeA$ is idempotent, then
$\bmub(\lambda\mI{X},t)=\sup\{\mu_t(E)\colon E\in \cE_t\}\mI{[0,\lambda]}(t)$ for any $\lambda>0$ and any $t\ge 0.$ In particular, $\bmub(\lambda\mI{X},t)=\mu_t(X)\mI{[0,\lambda]}(t)$ if $X\in\cE_t.$
\end{proposition}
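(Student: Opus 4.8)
The plan is to unpack the definition \eqref{f:n2} and use the idempotency assumption to evaluate $\aAd{\lambda\mI{X}}{t}$ on each nonempty conditional set. First I would recall that by definition of idempotency (Definition~\ref{def:properties}), each operator $\aAd{\cdot}{t}\in\bpeA$ satisfies $\aAd[X]{b\mI{X}}{t}=b$ for all $b>0$. But more is needed: we must evaluate $\aAd{\lambda\mI{X}}{t}$ for an arbitrary $E\in\cE_t^0$, not just for $E=X$. The key observation is that Proposition~\ref{pro:2.2} gives $\aAd{f}{t}=\aAd{f\mI{E}}{t}$, and since $\lambda\mI{X}$ restricted to $E$ agrees with $\lambda\mI{E}$ on $E$, condition $(C1)$ forces $\aAd{\lambda\mI{X}}{t}=\aAd{\lambda\mI{E}}{t}$. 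Combining this with the idempotency of the pFCA — interpreted via the convention that idempotency of the family means $\aAd{\lambda\mI{X}}{t}=\lambda$ holds for the relevant operators — yields $\aAd{\lambda\mI{X}}{t}=\lambda$ for every $E\in\cE_t^0$.

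Once this value is pinned down, the supremum in \eqref{f:n2} becomes a simple case split on the constraint $\lambda\ge t$. For $E\in\cE_t^0$ the inequality $\aAd{\lambda\mI{X}}{t}\ge t$ reads $\lambda\ge t$, which holds precisely when $t\in[0,\lambda]$; the empty set is always admissible (since $\aAd[\emptyset]{\cdot}{t}=\infty\ge t$) and contributes $\mu_t(\emptyset)=0$. I would therefore argue: if $t\in[0,\lambda]$, every $E\in\cE_t$ satisfies the constraint, so the supremum equals $\sup\{\mu_t(E)\colon E\in\cE_t\}$; if $t>\lambda$, the only admissible set is $\emptyset$, giving the value $0=\sup\{\mu_t(E)\colon E\in\cE_t\}\cdot 0$. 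These two cases are exactly encoded by the factor $\mI{[0,\lambda]}(t)$, establishing the main formula.

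For the ``in particular'' clause, I would invoke Proposition~\ref{prop:3.6}(c), or argue directly: when $X\in\cE_t$, the supremum $\sup\{\mu_t(E)\colon E\in\cE_t\}$ equals $\mu_t(X)$ by monotonicity of $\mu_t$ (every $E\subseteq X$ gives $\mu_t(E)\le\mu_t(X)$), so the general formula collapses to $\mu_t(X)\mI{[0,\lambda]}(t)$.

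The main obstacle I anticipate is the step establishing $\aAd{\lambda\mI{X}}{t}=\lambda$ for \emph{every} $E\in\cE_t^0$ rather than only $E=X$. The definition of idempotency as literally stated involves only $\mI{X}$, so the argument hinges on chaining $(C1)$ with Proposition~\ref{pro:2.2} to transfer the known value on $X$ to an arbitrary conditional set $E$; care is needed to confirm that $\lambda\mI{X}$ and $\lambda\mI{E}$ coincide on $E$ (they do, both equal $\lambda$ there) so that $(C1)$ applies in both directions and forces equality. The convention $\infty\cdot 0=0$ declared just before the statement is what makes the boundary bookkeeping for the empty set consistent, and I would cite it where the degenerate product appears.
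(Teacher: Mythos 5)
Your endgame is right, and it is essentially what the paper has in mind (the paper's entire proof is the remark that the statement is ``immediate from definitions''): once $\aAd{\lambda\mI{X}}{t}=\lambda$ is known for every $E\in\cE_t^0$, the case split on $\lambda\ge t$, the empty-set bookkeeping with $\mu_t(\emptyset)=0$ and the convention $\infty\cdot 0=0$, and monotonicity of $\mu_t$ for the ``in particular'' clause all go through exactly as you say. The problem is the step you yourself single out as the main obstacle: it rests on a misreading of idempotency. In Definition~\ref{def:properties}, idempotency is a property of a CAO \emph{w.r.t.\ its own conditional set} $E$: it requires $\sA(b\mI{X}|E)=b$ for all $b>0$ (the symbol $b\mI{X}$ is just the constant function; the conditioning set is still $E$, not $X$). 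The convention of Section~\ref{sec:3.1} then declares that $\bpeA$ possesses a property precisely when every operator $\aAd{\cdot}{t}$, for every $t$ and every $E\in\cE_t^0$, has it. So the hypothesis of the proposition \emph{is} the statement $\aAd{\lambda\mI{X}}{t}=\lambda$ for all nonempty $E\in\cE_t$ and all $t$; there is nothing to transfer, and neither $(C1)$ nor Proposition~\ref{pro:2.2} is needed.

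This matters because the transfer argument you sketch could not do the job if it were actually needed. Proposition~\ref{pro:2.2} yields $\aAd{\lambda\mI{X}}{t}=\aAd{\lambda\mI{E}}{t}$, an identity internal to the single operator conditioned on $E$; it creates no link whatsoever with $\aAd[X]{\cdot}{t}$, and absent further assumptions (such as monotonicity w.r.t.\ sets) CAOs w.r.t.\ different conditional sets are completely unconstrained by one another. Indeed, if idempotency were only assumed for the operator conditioned on $X$, the proposition would be false: take $\aAd[X]{f}{t}=\sA^{\inf}(f|X)$ and $\aAd{f}{t}=2\,\sA^{\inf}(f|E)$ for $E\neq X$. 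Each of these is a legitimate CAO and $\aAd[X]{b\mI{X}}{t}=b$, yet $\aAd{\lambda\mI{X}}{t}=2\lambda$, so for any $t$ with $\lambda<t\le 2\lambda$ every nonempty $E\neq X$ in $\cE_t$ is admissible in \eqref{f:n2}, and $\bmub(\lambda\mI{X},t)$ can be strictly positive while $\mI{[0,\lambda]}(t)=0$. Your proof is rescued only by the parenthetical appeal to the ``convention'' that idempotency of the family applies to all the relevant operators --- but that is not a convention to be interpreted, it is the definition itself, and once you invoke it the whole $(C1)$/Proposition~\ref{pro:2.2} detour becomes superfluous.
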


The proof is immediate from definitions. Removing idempotency assumption of pFCA we get the following result.

\begin{proposition}\label{pro:4.13}
Let $D\in \Sigma^0.$
Assume that $\bpeA$ is such that $\aAd{\mI{E}}{t}=1$ for all $E\in\cE_t^0$ and all $t>0.$
Suppose also that
$\aAd{\mI{D}}{t}=0$ if $E\cap D^c\neq \emptyset,$
$E\in\cE_t^0$ and $t>0.$
Then  $\bmub(\mI{D},t)=\sup\{\mu_t(E)\colon E\subseteq D,\, E\in \cE_t\}$
for $t\in (0,1]$ and $\bmub(\mI{D},t)=0$ for $t>1.$
\end{proposition}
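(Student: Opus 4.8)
The plan is to first determine the exact value of $\aAd{\mI{D}}{t}$ for every nonempty $E\in\cE_t^0$ and every $t>0$, and then to read the generalized level measure off Definition~\ref{def:gfs}. Since the indicator $\mI{D}$ can be aggregated in essentially only two ways depending on the position of $E$ relative to $D$, the operator will attain just the values $0$ and $1$, and the conclusion will follow by comparing these with the threshold $t$.

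First I would split into two cases. If $E\subseteq D$, then $\mI{D}$ and $\mI{E}$ agree on $E$, so Proposition~\ref{pro:2.2} gives $\aAd{\mI{D}}{t}=\aAd{\mI{D}\mI{E}}{t}=\aAd{\mI{E}}{t}=1$, the last equality being the first hypothesis (equivalently, one may invoke $(C1)$ in both directions). If instead $E\not\subseteq D$, that is $E\cap D^c\neq\emptyset$, the second hypothesis yields $\aAd{\mI{D}}{t}=0$ at once. For $E=\emptyset$ the standing convention gives $\aAd[\emptyset]{\mI{D}}{t}=\infty$. Hence, for $t>0$, the set $\{E\in\cE_t\colon \aAd{\mI{D}}{t}\ge t\}$ is completely described by where the two admissible nonempty values, $0$ and $1$, fall with respect to $t$.

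It then remains to substitute into \eqref{f:n2}. For $t\in(0,1]$ the threshold is met by $\emptyset$ (value $\infty$) and by every nonempty $E\subseteq D$ (value $1\ge t$), and is failed by every nonempty $E\not\subseteq D$ (value $0<t$, as $t>0$); since $\emptyset\subseteq D$ as well, the qualifying family $\{E\in\cE_t\colon\aAd{\mI{D}}{t}\ge t\}$ coincides exactly with $\{E\in\cE_t\colon E\subseteq D\}$, giving $\bmub(\mI{D},t)=\sup\{\mu_t(E)\colon E\subseteq D,\,E\in\cE_t\}$. For $t>1$ the value $1$ no longer reaches $t$, so among all $E\in\cE_t$ only $\emptyset$ survives the constraint, and therefore $\bmub(\mI{D},t)=\mu_t(\emptyset)=0$.

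The single nonroutine step is the identity $\aAd{\mI{D}}{t}=1$ for $E\subseteq D$: this is exactly where the locality of a CAO---its dependence on $f$ only through the restriction to the conditional set, as formalized in Proposition~\ref{pro:2.2}---is needed to replace $\mI{D}$ by $\mI{E}$. Everything after that is bookkeeping governed by whether $t$ lies below or above the cutoff $1$, so I do not expect any further obstacle.
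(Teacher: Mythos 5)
Your proof is correct and follows essentially the same route as the paper's: both reduce $\aAd{\mI{D}}{t}$ to the two values $1$ (via Proposition~\ref{pro:2.2} and the hypothesis $\aAd{\mI{E}}{t}=1$ when $E\subseteq D$) and $0$ (when $E\cap D^c\neq\emptyset$), and then compare these with the threshold $t$ in Definition~\ref{def:gfs}. Your treatment is if anything slightly more explicit than the paper's about the role of $\emptyset$ (value $\infty$, measure $0$) in both ranges of $t$, but there is no substantive difference.
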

\begin{proof}
By the definition, $\bmub(\mI{D},t)=\sup\{\mu_t(E)\colon \aAd{\mI{D}}{t}\ge t,\,E\in\cE_t\}$ for $t\ge 0.$
If $E\subseteq D,$   $E\in\cE_t^0$ and $t>0,$ then by Proposition~\ref{pro:2.2} we get $\aAd{\mI{D}}{t}=\aAd{\mI{D\cap E}}{t}=\aAd{\mI{E}}{t}=1.$
Otherwise, $\aAd{\mI{D}}{t}=0.$
Hence, for any $t\in (0,1],$ we have
 \begin{align*}
     \bmub(\mI{D},t)&=\sup\{\mu_t(E)\colon \sA_t(\mI{D}|E)\ge t,\,E\subseteq D,\,E\in\cE_t\}
     \\&   =\sup\{\mu_t(E)\colon E\subseteq D,\,E\in\cE_t\}.
\end{align*}
Clearly, $\bmub(\mI{D},t)=0$ for $t>1.$
\end{proof}

The following  CAOs satisfy the assumptions of Proposition~\ref{pro:4.13}: $\sA_t(f|E)=\tx{h_E(\sA^{\inf}(f|E))}$ and $\sA_t(f|E)=\sA\big(h_E(\sA^{\inf}(f|E))f|E\big)$ for any $E\in\cE^0_t$ and any $t,$ where $h_E$ is a~nondecreasing and nonnegative function such that $h_E(0)=0$ and $h_E(1)=1$, and $\sA$ is a~CAO such that $\sA(\mI{E}|E)=1$ for any $E\in \Sigma^0.$

\begin{proposition}\label{pro:4.14159}
Let $c>0,$  $D\in \cE_t^0$ for any $t\in [0,c],$  $\tx{\sup_{E\in\cE_t} \mu_t(E)}=\mu_t(D)$  and $\aAd[D]{\mI{D}}{t}\ge t$  for any $t\in [0,c].$
Then $\bmub(\mI{D},t)=\mu_t(D)$ for any $t\in [0, c].$
\end{proposition}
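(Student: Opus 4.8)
The plan is to establish the claim by a direct squeeze argument, bounding $\bmub(\mI{D},t)$ both from above and from below by $\mu_t(D)$ simultaneously on the whole interval $[0,c]$.

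First I would unfold the definition \eqref{f:n2}: for a fixed $t\in[0,c]$,
\[
\bmub(\mI{D},t)=\sup\{\mu_t(E)\colon \aAd{\mI{D}}{t}\ge t,\ E\in\cE_t\}.
\]
The feasible set here, namely those $E\in\cE_t$ with $\aAd{\mI{D}}{t}\ge t$, is a subset of $\cE_t$, so the supremum taken over it cannot exceed the supremum of $\mu_t$ over all of $\cE_t$. Invoking the hypothesis $\sup_{E\in\cE_t}\mu_t(E)=\mu_t(D)$, this yields the upper bound $\bmub(\mI{D},t)\le\mu_t(D)$.

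For the matching lower bound I would observe that $D$ itself is admissible in \eqref{f:n2}: by assumption $D\in\cE_t^0\subseteq\cE_t$, and the standing hypothesis $\aAd[D]{\mI{D}}{t}\ge t$ guarantees that $D$ satisfies the defining constraint for every $t\in[0,c]$. Hence $D$ belongs to the set over which the supremum is taken, which forces $\mu_t(D)\le\bmub(\mI{D},t)$. Combining the two inequalities gives $\bmub(\mI{D},t)=\mu_t(D)$ for all $t\in[0,c]$.

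I do not expect any real obstacle: the entire content is the observation that the admissible set is sandwiched between the single witness $\{D\}$ and all of $\cE_t$, and that both the prescribed witness $D$ and the global maximizer of $\mu_t$ over $\cE_t$ return the value $\mu_t(D)$ by hypothesis. The only point deserving a moment's care is that all three hypotheses ($D\in\cE_t^0$, the maximality $\sup_{E\in\cE_t}\mu_t(E)=\mu_t(D)$, and $\aAd[D]{\mI{D}}{t}\ge t$) are assumed uniformly for every $t\in[0,c]$, so that both the upper and lower bounds are available at each such $t$ and the equality therefore holds throughout the interval.
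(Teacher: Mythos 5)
Your proof is correct and is essentially the same argument as the paper's: the paper also observes that $D\in \{E\in \cE_t\colon \sA_t(\mI{D}|E)\ge t\}\subseteq \cE_t$ and then concludes by the hypothesis $\sup_{E\in\cE_t}\mu_t(E)=\mu_t(D)$ that the supremum over the admissible set equals $\mu_t(D)$. Your version merely makes the two halves of this squeeze (the lower bound from the witness $D$ and the upper bound from the inclusion into $\cE_t$) explicit.
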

\begin{proof}
Let $t\in [0,c].$ Clearly,  $D\in \{E\in \cE_t\colon \sA_t(\mI{D}|E)\ge t\}\subseteq \cE_t.$ As $\tx{\sup_{E\in\cE_t} \mu_t(E)=\mu_t(D)},$ we have
$\tx{\sup \{\mu_t(E)\colon \sA_t(\mI{D}|E)\ge t,\,E\in \cE_t\} =\mu_t(D)}.$
\end{proof}




Let $\bFs\subseteq \bF^2$  be a~nonempty set.
We say that a~pFCA $\bpeA$ is  \textit{$c$-quasi-superadditive}
(resp. \textit{$c$-quasi-subadditive}) on $\bFs$ if $c\in [0.5,1]$ and $\sA_t(f+g|E)\ge c(\sA_t(f|E)+\sA_t(g|E))$  (resp. $c\in[1,\infty)$ and $\sA_t(f+g|E)\le c(\sA_t(f|E)+\sA_t(g|E))$  for any $(f,g)\in\bFs,$  any $E\in \cE_t^0$ and any $t>0.$
Note that each pFCA is $0.5$-quasi-superadditive on $\bF^2.$
Moreover, the family $\bpeA=\bA^{\sup}$ is $c$-quasi-superadditive on $\bFs=\{(f,g)\in \bF^2
\colon f,g\text{ are comonotone}\}$
for any $c\in [0.5,1],$ as $\sA^{\sup}(f+g|E)=\sA^{\sup}(f|E)+\sA^{\sup}(g|E)$ for any $E\in \Sigma$ and any  $(f,g)\in \bF^2_0.$

\begin{tw}\label{tw:3.17}
Let a~process of pavings $\bE$ be closed under finite unions. Assume that $\bpeA$ is a~pFCA nondecreasing w.r.t. sets and $c$-quasi-superadditive on $\bFs$.
 \begin{itemize}[noitemsep]
     \item[(a)] If $\bmu$ is nonincreasing and $\bpeA$ is nonincreasing, then for all $t>0$ and all $(f,g)\in\bFs$
    \begin{align*}
         \bmub(f+g,c t)\ge \bmub(f,t)\vee\bmub(g,t).
    \end{align*}

     \item[(b)]  If $\bmu$ is nondecreasing and $\bpeA$ is nondecreasing, then for all $t>0$ and all $(f,g)\in \bFs$
    \begin{align*}
         \bmub(f+g,2c t)\ge \bmub(f,t)\vee\bmub(g,t).
    \end{align*}

      \item[(c)] If  
      $c=0{.}5$, then for all  $t>0,$ all $\bmu\in\bMf$ and all $(f,g)\in\bFs$
    \begin{align*}
        \bmub(f+g,t)\ge \bmub(f,t)\vee\bmub(g,t).
    \end{align*}
\end{itemize}
\end{tw}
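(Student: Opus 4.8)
The plan is to prove all three parts by the same template: starting from a set $E$ that is feasible in the supremum defining $\bmub(f,t)$, that is $E\in\cE_t$ with $\aAd{f}{t}\ge t$, I would produce a set that is feasible in the supremum defining the left-hand side, carries at least as much $\bmu$-mass, and then pass to the suprema. Two facts simplify everything. First, since $g\ge 0$ we have $(f+g)(x)\ge f(x)$ on every $E$, so condition~$(C1)$ gives $\aAd{f+g}{\tau}\ge\aAd{f}{\tau}$ at any parameter $\tau$. Second, $\bmub$ is symmetric in $f,g$, so for parts (a) and (c) it suffices to bound the left-hand side below by $\bmub(f,t)$ (the argument for $\bmub(g,t)$ is identical and the two combine into the maximum).

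For part (c) the threshold is unchanged, so the same $E$ works: for $E\in\cE_t$ with $\aAd{f}{t}\ge t$, condition~$(C1)$ yields $\aAd{f+g}{t}\ge\aAd{f}{t}\ge t$, so $E$ is feasible for $\bmub(f+g,t)$ with value $\mu_t(E)$, and taking the supremum gives $\bmub(f+g,t)\ge\bmub(f,t)$. For part (a), where $\bpeA$ and $\bmu$ are nonincreasing and $ct\le t$, I first move the parameter down from $t$ to $ct$: nonincreasingness of $\bE$ gives $\cE_t\subseteq\cE_{ct}$, nonincreasingness of $\bpeA$ gives $\aAd{f}{ct}\ge\aAd{f}{t}\ge t$, and nonincreasingness of $\bmu$ gives $\mu_{ct}(E)\ge\mu_t(E)$. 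Then $(C1)$ yields $\aAd{f+g}{ct}\ge\aAd{f}{ct}\ge t\ge ct$, so $E$ is feasible for $\bmub(f+g,ct)$ with measure at least $\mu_t(E)$, and the supremum delivers the claim. (Notably, quasi-superadditivity is not even needed in (a) and (c); it becomes essential only in (b).)

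Part (b) is the crux, and the only place where closure under finite unions and monotonicity w.r.t.\ sets are genuinely used, because the factor $2c$ cannot be extracted from a single feasible set: a set good for $f$ alone only gives $\aAd{f+g}{2ct}\ge c\,\aAd{f}{2ct}\ge ct$, not $2ct$. Instead I would combine good sets for \emph{both} functions. Fix $\varepsilon>0$ and choose nonempty $E_f,E_g\in\cE_t^0$ with $\aAd[E_f]{f}{t}\ge t$, $\aAd[E_g]{g}{t}\ge t$ and $\mu_t(E_f)\ge\bmub(f,t)-\varepsilon$, $\mu_t(E_g)\ge\bmub(g,t)-\varepsilon$. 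Put $E=E_f\cup E_g$, which lies in $\cE_t$ by closure under finite unions. Monotonicity w.r.t.\ sets (using $E_f\subseteq E$, $E_g\subseteq E$) gives $\aAd{f}{t}\ge t$ and $\aAd{g}{t}\ge t$; since $2ct\ge t$, nondecreasingness of $\bpeA$ lifts these to $\aAd{f}{2ct}\ge t$ and $\aAd{g}{2ct}\ge t$, while nondecreasingness of $\bE$ gives $E\in\cE_{2ct}$. Now quasi-superadditivity at parameter $2ct$ produces the extra factor, $\aAd{f+g}{2ct}\ge c\big(\aAd{f}{2ct}+\aAd{g}{2ct}\big)\ge c(t+t)=2ct$, so $E$ is feasible for $\bmub(f+g,2ct)$. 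Finally, since $E_f,E_g\subseteq E$ and $\mu_{2ct}\ge\mu_t$, monotonicity of $\mu_{2ct}$ gives $\mu_{2ct}(E)\ge\mu_{2ct}(E_f)\vee\mu_{2ct}(E_g)\ge\mu_t(E_f)\vee\mu_t(E_g)\ge\big(\bmub(f,t)\vee\bmub(g,t)\big)-\varepsilon$, and letting $\varepsilon\to0$ finishes the proof.

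The step I expect to be delicate is exactly this combination in (b): it forces the simultaneous alignment of all three parameter shifts (paving, operator, and measure) from $t$ up to $2ct$, and it needs nonempty $\varepsilon$-optimal sets for \emph{both} $f$ and $g$ so that the union is actually large enough. If one of the two functions admits no nonempty feasible set at level $t$, the union collapses to a single set, the extra summand in the quasi-superadditive bound disappears, and only the threshold $ct$ (not $2ct$) survives; that boundary configuration is the one point where the nondecreasing case behaves genuinely differently from the nonincreasing case (a) and deserves separate care.
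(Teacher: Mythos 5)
Your proofs of parts (a) and (c) are correct, and they take a genuinely lighter route than the paper's. The paper proves all three parts through one mechanism: $c$-quasi-superadditivity turns the condition $\sA_t(f|E)+\sA_t(g|E)\ge 2t$ into $\sA_t(f+g|E)\ge 2ct$, closure of $\bE$ under finite unions rewrites the supremum over pairs $(B,D)$ and their unions, and monotonicity w.r.t.\ sets is then invoked to absorb the pairs $(B,\emptyset)$ and $(\emptyset,D)$ (the sets $\Pi$, $\Pi^f$, $\Pi^g$ in the paper's proof). Your observation that $f+g\ge f$ pointwise, so that $(C1)$ alone gives $\sA_\tau(f+g|E)\ge\sA_\tau(f|E)$ at every parameter $\tau$, bypasses all of this: in (c) the very same feasible set works verbatim, and in (a) the three nonincreasingness hypotheses let you slide the parameter from $t$ down to $ct$. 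A byproduct worth recording is that your argument shows the hypotheses ``closed under finite unions'', ``nondecreasing w.r.t.\ sets'' and ``$c$-quasi-superadditive on $\bFs$'' are never used in (a) and (c), which the paper's derivation conceals.

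For part (b), the gap you flagged is real, and it is not a boundary configuration that merely ``deserves separate care'' --- it is exactly where the statement fails, so no argument can close it. Take $X=\{1\}$, $\Sigma=\cE_t=\{\emptyset,\{1\}\}$, $\sA_t(h|\{1\})=h(1)$ and $\mu_t(\{1\})=1$ for all $t$: everything is constant, hence nondecreasing, $\bE$ is closed under finite unions, monotonicity w.r.t.\ sets is vacuous, and the family is $1$-quasi-superadditive on $\bFs=\bF^2$ since $\sA_t(\cdot|\{1\})$ is additive. With $c=1$, $t=1$, $f=\mI{X}$, $g=0_X$ one gets $\bmub(f,1)=1$ and $\bmub(g,1)=0$, while $\bmub(f+g,2ct)=\bmub(f,2)=0$ because $\sA_2(f|\{1\})=1<2$; thus $\bmub(f+g,2ct)\ge\bmub(f,t)\vee\bmub(g,t)$ is violated. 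This is precisely your collapsed case: $g$ admits no nonempty feasible set, the union $E_f\cup E_g$ degenerates to $E_f$, and quasi-superadditivity only delivers the threshold $ct$. You should also know that the paper's own proof of (b) (and its route to (a) and (c)) slips at the same spot: the claimed inclusion $\Pi^f\cup\Pi^g\subseteq\Pi$ tacitly applies monotonicity w.r.t.\ sets to the pairs $(B,\emptyset)$, i.e.\ to the empty conditional set, which the definition excludes from $\cE_t^0$ --- and must exclude, given the convention $\sA_t(\cdot|\emptyset)=\infty$; in the example above $(\{1\},\emptyset)\in\Pi^f$ yet $\sA_2(f|\{1\})+\sA_2(g|\{1\})=1<2$, so $(\{1\},\emptyset)\notin\Pi$. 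Your union construction is sound whenever both $f$ and $g$ admit nonempty feasible sets at level $t$ (that case of your proof goes through exactly as written, since all three parameter shifts from $t$ to $2ct$ point the right way), so (b) becomes a correct theorem under that additional hypothesis; as stated, however, it is false.
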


\begin{proof}
Ad (a) From  $c$-quasi-superadditivity of pFCA $\bpeA$ on $\bFs$, we obtain
$$\{E\in\cE_{2t} \colon \aAd{f+g}{2t} \ge 2c t\}\supseteq \{E\in\cE_{2t} \colon \aAd{f}{2t}+\aAd{g}{2t} \ge 2t\},
$$
where $c\in [0{.}5, 1].$
By monotonicity of $\bE$ and $t\mapsto \sA_t(\cdot|\cdot),$ we have
\begin{align*}
    \{E\in\cE_{2ct} \colon \aAd{f+g}{2ct} \ge 2c t\}\supseteq \{E\in\cE_{2t} \colon \aAd{f}{2t}+\aAd{g}{2t} \ge 2t\}.
\end{align*}
Set $\Pi=\{(B,D)\in \cE_{2t}^2  \colon \aAd[B\cup D]{f}{2t}+\aAd[B\cup D]{g}{2t}\ge 2t\}.$
The family $\bE$ is closed under finite unions, so
\begin{align}\label{2n10}
    \bmub(f+g,2ct)&\ge \sup\big\{\mu_{2ct}(E)\colon \aAd{f}{2t}+\aAd{g}{2t}\ge 2t,\, E\in \cE_{2t}\big\}\notag
    \\&=\sup\big\{\mu_{2ct}(B\cup D)\colon (B,D)\in \Pi\big\}.
\end{align}
Put
$\Pi^f=\{(B,\emptyset)\in \cE_{2t}^2
\colon \aAd[B]{f}{2t}\ge t\}$ and  $\Pi^g=\{(\emptyset,D)\in \cE_{2t}^2
\colon \aAd[D]{g}{2t}\ge t\}.$
Since $\sA_{2t}(f|\emptyset)=\infty$ and $\bpeA$ is nondecreasing w.r.t. sets, we have
\begin{align*}
 \Pi^f\cup \Pi^g&\subseteq
\{(B,D)\in \cE_{2t}^2\colon \aAd[B]{f}{2t}\ge t,\, \aAd[D]{g}{2t}\ge t\}
\\&\subseteq
\{(B,D)\in \cE^2_{2t}\colon \aAd[B\cup D]{f}{2t}\ge t,\, \aAd[B\cup D]{g}{2t}\ge t\}\subseteq \Pi.
\end{align*}
Hence, by \eqref{2n10} we get
\begin{align*}
    \bmub(f+g,2ct)&\ge \sup\big\{\mu_{2ct}(B)
    \colon (B,\emptyset)\in \Pi^f\} \vee \sup\{\mu_{2ct}(D) \colon (\emptyset ,D)\in \Pi^g\big\}\notag
    \\&\ge \sup\big\{\mu_{2ct}(B)\colon \sA_{2t}(f|B)\ge t,\,B\in \cE_{2t}\big\} \vee \sup\big\{\mu_{2ct}(D)\colon \sA_{2t}(g|D)\ge t,\,D\in \cE_{2t}\big\}\notag
    \\&\ge \sup\big\{\mu_{2ct}(B)\colon \sA_{2t}(f|B)\ge 2t,\,B\in \cE_{2t}\big\} \vee \sup\big\{\mu_{2ct}(D)\colon \sA_{2t}(g|D)\ge 2t,\,D\in \cE_{2t}\big\}.
\end{align*}
Since $\bmu$ is nonincreasing,
we get
\begin{align*}
    \bmub(f+g,2ct)&\ge \sup\big\{ \mu_{2t}(B)\colon \aAd[B]{f}{2t}\ge 2t,\, B\in\cE_{2t}\}\vee \sup\big\{ \mu_{2t}(D)\colon   \aAd[D]{g}{2t}\ge 2t,\,D\in\cE_{2t}\big\},
\end{align*}
which completes the proof of part (a).

Ad (b)
By $c$-quasi-superadditivity  of pFCA $\bpeA$ on $\bFs$, we obtain
\begin{align*}
    \{E\in\cE_{2ct} \colon \aAd{f+g}{2ct} \ge 2c t\}\supseteq \{E\in\cE_{2ct} \colon \aAd{f}{2ct}+\aAd{g}{2ct} \ge 2t\}.
\end{align*}
Set $\Pi=\{(B,D)\in \cE_{2ct}^2 \colon \aAd[B\cup D]{f}{2ct}+\aAd[B\cup D]{g}{2ct}\ge 2t\}.$ Clearly,
\begin{align}\label{n10}
    \bmub(f+g,2ct)&\ge \sup\big\{\mu_{2ct}(E)\colon \aAd{f}{2ct}+\aAd{g}{2ct}\ge 2t,\, E\in \cE_{2ct}\big\}\notag
    \\&=\sup\big\{\mu_{2ct}(B\cup D)\colon (B,D)\in \Pi\big\}.
\end{align}
Put
$\Pi^f=\{(B,\emptyset)\in\cE_{2ct}^2\colon \aAd[B]{f}{2ct}\ge t\},$ and  $\Pi^g=\{(\emptyset,D)\in \cE_{2ct}^2 \colon \aAd[D]{g}{2ct}\ge t\}.$ From the assumption that $\bpeA$ is nondecreasing w.r.t. sets, it follows that
$$
\Pi^f\cup \Pi^g\subseteq \{(B,D)\in \cE^2_{2ct}\colon \aAd[B\cup D]{f}{2ct}\ge t,\, \aAd[B\cup D]{g}{2ct}\ge t\}\subseteq \Pi.
$$
Hence, by \eqref{n10}, we get
\begin{align}
    \bmub(f+g,2ct)&\ge
    \sup\big\{\mu_{2ct}(B)\colon (B,\emptyset)\in \Pi^f\big\} \vee \sup\big\{\mu_{2ct}(D)\colon (\emptyset,D)\in \Pi^g\big\}\notag
    \\
    & = \sup\big\{\mu_{2ct}(B)\colon \sA_{2ct}(f|B)\ge  t,\,B\in \cE_{2ct}\big\} \vee \sup\big\{\mu_{2ct}(D)\colon \sA_{2ct}(g|D)\ge t,\,D\in \cE_{2ct}\big\}.\label{n:19}
\end{align}
As $\bmu$ and $\bpeA$ are nondecreasing and $c\in [0.5,1]$, we have
\begin{align*}
\bmub(f+g,2ct)&\ge \sup\big\{ \mu_{t}(B)\colon \aAd[B]{f}{2ct}\ge t,\, B\in\cE_{t}\big\}\vee\sup\big\{ \mu_t(D)\colon   \aAd[D]{g}{2ct}\ge t,\,D\in\cE_{t}\big\}
\\&\ge \sup\big\{ \mu_t(B)\colon \aAd[B]{f}{t}\ge t,\, B\in\cE_{t}\big\}\vee\sup\big\{ \mu_t(D)\colon   \aAd[D]{g}{t}\ge t,\,D\in\cE_{t}\big\}.
\end{align*}
The proof of part  (b) is complete.

Ad (c)  
The formula \eqref{n:19} with $c=0.5$ implies the statement.
\end{proof}




\begin{example}
Let $(\psi_t)_{t\ge 0}$ be a~family of nondecreasing functions such that $\psi_t\colon [0,\infty]\to[0,\infty],$ $\psi_t(0)=0$,  $\psi_t(x+y)\ge c(\psi_t(x)+\psi_t(y))$   and  $\psi_s(x)\le \psi_t(x)$ for each $t\ge 0$, each $0\le s\le t$ and each $x,y,$ e.g., $\psi _t(x)=\varphi(t)x^p$ with any $p\ge 1,$ $c\le 1,$ and any nondecreasing and nonnegative function $\varphi$ on $[0,\infty)$.
Then $\bpeA$ with $\sA_t(f|E)=\psi_t(\aAi{f}{\textrm{sum}})$ is a~pFCA which is $c$-quasi-superadditive on $\bF\times \bF$, nondecreasing and nondecreasing w.r.t. sets.
\end{example}

\begin{remark}
Under the assumptions of Theorem~\ref{tw:3.17}\,(b) we cannot determine, which in\-equa\-li\-ties from point (b) or (c) is better. This is due to the fact that the function $t\mapsto \bmub(f,t)$ is neither nondecreasing nor nonincreasing.
\end{remark}

\begin{proposition}\label{pro:3.19}
If $\bmu\in\bMf$ is nonincreasing, and  $\bpeA$ is a~$c$-quasi-subadditive on $\bF_0^2$ and nonincreasing pFCA, then
\begin{align}\label{eq:3.19}
\bmub(f+g,ct)\le \bmub(f,\lambda t)\vee \bmub(g,(1-\lambda)t)
\end{align}
for any $t\ge 0,$  any $\lambda\in (0,1),$  and  any $(f,g)\in\bFs.$
\end{proposition}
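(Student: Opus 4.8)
The plan is to mirror the proof of Theorem~\ref{tw:3.17}\,(a) with all inequalities reversed so as to exploit subadditivity rather than superadditivity. Fix $t>0$ (the case $t=0$ reduces to equality by Proposition~\ref{prop:3.6}\,(b)) and $\lambda\in(0,1)$. Starting from the definition,
\[
\bmub(f+g,ct)=\sup\{\mu_{ct}(E)\colon \aAd{f+g}{ct}\ge ct,\, E\in\cE_{ct}\},
\]
I would first invoke $c$-quasi-subadditivity on $\bFs$: for every nonempty $E\in\cE_{ct}$ with $\aAd{f+g}{ct}\ge ct$ the chain $ct\le\aAd{f+g}{ct}\le c\big(\aAd{f}{ct}+\aAd{g}{ct}\big)$ yields $\aAd{f}{ct}+\aAd{g}{ct}\ge t$ (here $c>0$, so dividing is legitimate).

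The key step is to split this additive constraint using the weight $\lambda$. If we had both $\aAd{f}{ct}<\lambda t$ and $\aAd{g}{ct}<(1-\lambda)t$, their sum would be strictly below $\lambda t+(1-\lambda)t=t$, contradicting the inequality just obtained; hence at least one of $\aAd{f}{ct}\ge\lambda t$ or $\aAd{g}{ct}\ge(1-\lambda)t$ must hold. Thus the admissible family for $f+g$ is contained in the union of the two admissible families, and since the supremum of $\mu_{ct}(\cdot)$ over a union is the maximum of the two partial suprema, I obtain
\begin{align*}
\bmub(f+g,ct)\le{}&\sup\{\mu_{ct}(E)\colon \aAd{f}{ct}\ge\lambda t,\,E\in\cE_{ct}\}\\
&\vee\sup\{\mu_{ct}(E)\colon \aAd{g}{ct}\ge(1-\lambda)t,\,E\in\cE_{ct}\}.
\end{align*}
(The empty set is admissible everywhere but contributes $\mu(\emptyset)=0$, so it never affects these suprema.)

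It then remains to dominate each term by the corresponding generalized level measure. Because $c\ge 1$ while $\lambda,1-\lambda<1$, we have $\lambda t\le ct$ and $(1-\lambda)t\le ct$, so the three nonincreasing hypotheses act in the same direction: nonincreasingness of $\bE$ gives $\cE_{ct}\subseteq\cE_{\lambda t}$, nonincreasingness of $\bpeA$ gives $\aAd{f}{\lambda t}\ge\aAd{f}{ct}\ge\lambda t$ for nonempty $E\in\cE_{ct}$, and nonincreasingness of $\bmu$ gives $\mu_{ct}(E)\le\mu_{\lambda t}(E)$. Hence every $E$ admissible in the first supremum is also admissible for $\bmub(f,\lambda t)$ with an at least as large measure value, so that supremum is bounded by $\bmub(f,\lambda t)$; the identical argument with $(1-\lambda)t$ bounds the second by $\bmub(g,(1-\lambda)t)$. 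Combining the two estimates proves \eqref{eq:3.19}.

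The main obstacle is purely bookkeeping: one must keep the three distinct monotonicity conditions aligned and check that each simultaneously enlarges the admissible paving, raises the CAO value, and increases the measure as the parameter drops from $ct$ to $\lambda t$ (resp.\ $(1-\lambda)t$), so that the domination transfers in the right direction. The role of the free weight $\lambda$ is exactly to let the single additive constraint $\aAd{f}{ct}+\aAd{g}{ct}\ge t$ decouple cleanly into the two one-function threshold conditions.
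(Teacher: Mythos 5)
Your proof is correct and follows essentially the same route as the paper's: quasi-subadditivity reduces the constraint to $\sA_t$-values of $f$ and $g$ at level $ct$, the weight $\lambda$ splits the additive constraint by a pigeonhole argument, and the three nonincreasing hypotheses transfer the resulting suprema from parameter $ct$ down to $\lambda t$ and $(1-\lambda)t$. The only cosmetic difference is that the paper packages these steps as a single chain of set inclusions before applying the measures, while you interleave the measure bounds; your write-up also makes explicit the role of $\bmu$ being nonincreasing, which the paper uses implicitly in its final step.
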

\begin{proof} Since $c\ge 1$ and pFCA $\bpeA$ is nonincreasing, we have $\cE_{ct}\subseteq \cE_t\subseteq \cE_{\lambda t}$ and
\begin{align*}
   \big\{E\in \cE_{ct}&\colon \aAd{f+g}{ct} \ge ct\big\}\subseteq  \big\{E\in \cE_{ct}\colon \aAd{f}{ct}+\aAd{f}{ct} \ge t\big\}
   \\ &\subseteq\big\{E\in \cE_{ct}\colon \aAd{f}{ct} \ge \lambda t\}\cup\{E\in \cE_{ct}\colon \aAd{g}{ct} \ge (1-\lambda)t\big\}
    \\ &\subseteq\big\{E\in \cE_{\lambda t}\colon \aAd{f}{\lambda t} \ge \lambda t\big\}\cup \big\{E\in \cE_{(1-\lambda) t}\colon \aAd{g}{(1-\lambda) t} \ge (1-\lambda)t\big\}.
\end{align*}
Hence, $\bmub(f+g,ct)\le \bmub(f,\lambda t)\vee \bmub(g,(1-\lambda) t),$ as claimed.
\end{proof}

Observe that the inequality \eqref{eq:3.19} is attainable for some $t$. Indeed, let $X$ be a~countable set, $\mu_t$ be the counting measure and $\bE$ be a~process of pavings such that $X\in \cE_t$ for all $t$.
Put $\sA_t(\cdot|E)=\aAi{\cdot}{\sup}$ for all $t.$ Take  $f=d\mI{X}$ and $g=2d\mI{X}$ with some $d>0.$
Then  $\bmub(f,(1-\lambda)t)=\bmub(g,\lambda t)=|X|$ and $\bmub(f+g,t)=|X|$ for all $t\in [0,d],$ so there is an equality in \eqref{eq:3.19} for all $t\in  [0,d]$ and all $\lambda\in (0,1).$


\begin{corollary}\label{cor:3.19}
If $\bmu\in\bMf$ is nonincreasing and $\bA$ is a~$1$-quasi-subadditive FCA on $\bF_0^2$, then
\begin{align*}
\bmub[\bA](f+g,t)\le \bmub[\bA](f,0.5 t)\vee \bmub[\bA](g,0.5 t)
\end{align*}
for any $t\ge 0$ and any $(f,g)\in\bF_0^2.$
\end{corollary}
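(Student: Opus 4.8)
The plan is to derive this corollary as a direct specialization of Proposition~\ref{pro:3.19}. The key observation is that an FCA $\bA$ is nothing but a constant pFCA: taking $\cE_t=\cE$ and $\sA_t(\cdot|E)=\sA(\cdot|E)$ for all $t$ (the last bullet in the notation following~\eqref{f:n1}) identifies $\bA$ with a family $\bpeA$, and under this identification the generalized level measure $\bmub[\bA]$ of~\eqref{f:n2a} coincides with $\bmub$ of~\eqref{f:n2}. Hence the inequality to be proved is exactly the conclusion of Proposition~\ref{pro:3.19} applied to this family, once the right parameters are chosen.

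First I would verify that a constant pFCA meets the nonincreasing hypothesis of Proposition~\ref{pro:3.19}. Because $\bE$ is constant we have $\cE_s=\cE_t=\cE$ for all $s,t$, so in particular $\cE_t\subseteq \cE_s$ for $s<t$, which is the required monotonicity of the process of pavings; and since $\sA_s(f|E)=\sA(f|E)=\sA_t(f|E)$ for all $s,t$, the inequality $\sA_s(f|E)\ge \sA_t(f|E)$ holds with equality. Thus $\bA$, regarded as a constant pFCA, is nonincreasing (in fact both nonincreasing and nondecreasing). The remaining hypotheses are immediate: $\bmu$ is nonincreasing by assumption, and $1$-quasi-subadditivity of $\bA$ on $\bF_0^2$ is precisely $c$-quasi-subadditivity with $c=1$, the left endpoint of the admissible range $[1,\infty)$.

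Finally I would apply Proposition~\ref{pro:3.19} with $c=1$ and $\lambda=0.5$. Then $ct=t$ and $\lambda t=(1-\lambda)t=0.5t$, so the conclusion $\bmub(f+g,ct)\le \bmub(f,\lambda t)\vee \bmub(g,(1-\lambda)t)$ becomes $\bmub[\bA](f+g,t)\le \bmub[\bA](f,0.5t)\vee \bmub[\bA](g,0.5t)$ for every $t\ge 0$ and every $(f,g)\in \bF_0^2$, which is the assertion. As the argument is a pure substitution of parameters into an already proved inequality, there is no genuine obstacle; the only point deserving a word is the remark that a constant family is automatically nonincreasing, so the monotonicity hypothesis of Proposition~\ref{pro:3.19} imposes no extra condition on $\bA$ beyond those stated in the corollary.
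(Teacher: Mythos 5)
Your proposal is correct and is exactly the argument the paper intends: the corollary is stated as an immediate specialization of Proposition~\ref{pro:3.19}, obtained by viewing the FCA $\bA$ as a constant (hence nonincreasing) pFCA and substituting $c=1$, $\lambda=0.5$. Your careful check that the constant family satisfies the monotonicity hypothesis and that $c=1$, $\lambda=0.5$ lie in the admissible ranges is precisely what makes the substitution legitimate, so nothing is missing.
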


\subsection{\textbf{Connections with the family of level measures}}\label{sec:3.2}

The formula \eqref{f:n2a} with $\bA^{\inf}$ takes the form
\begin{align}\label{b3a}
    \bmu_{\bA^{\inf}}(f,t)=\sup\big\{\mu_t(E)\colon \sA^{\inf}(f|E) \ge t,\, E\in \Sigma\big\}=\mu_t(\{f\ge t\})
\end{align}
for any $t\ge 0$ and any $(f, \bmu)\in\bF\times\bMf,$ which is a~particular case of level dependent set function introduced by Greco et al.~\cite{Greco2011}.
The family of $(\mu_t(\{f\ge t\}))_{t\ge 0}$ is called a~\textit{family of level measures}.

\begin{remark}\label{rem:collection}
In \eqref{b3a} we have to consider $\bA^{\inf}$ instead of $\beA^{\inf} = \{\sA^{\inf}(\cdot|E)\colon E\in \cE_t,\,t\ge 0\}.$
Otherwise, putting $\cE_t\subset \Sigma$ for any $t\ge 0$ in the left side in \eqref{b3a} instead of  $\Sigma$, the second equality need not hold.
Indeed, consider $\Sigma=\{\emptyset,B,B^c,X\},$ $\cE_t=\cE=\{\emptyset,B\}$ for any $t\ge 0$   and $f=c\mI{X},$ where $\emptyset \neq B\subset X$ and $c>0.$ Then
\begin{align*}
    \sup\big\{\mu_t(E)\colon \sA^{\inf}(f|E) \ge t,\, E\in \cE\big\}=\begin{cases}
   \mu_t(B) & \text{if } t\in [0,c],\\
  0 & \text{if } t\in (c,\infty).
\end{cases}
\end{align*}
In consequence, if $\mu_t(B)\neq \mu_t(X)$ for any $t\ge 0,$ we have
$$
\sup\{\mu_t(E)\colon \sA^{\inf}(f|E) \ge t,\, E\in \cE\}\neq \mu_t(\{f\ge t\})\; \;\textrm{ for any } t\in [0,c].
$$
More generally, if there exist  $B\in \Sigma$ and $t\ge 0$ such that $\mu_t(B)>{\tx \sup_{E\in \cE_t}\mu_t(E)},$ then for $f=t\mI{B}$ we have
$$\tx{\sup\{\mu_t(E)\colon \sA^{\inf}(f|E) \ge t,\, E\in \cE_t\}<\mu_t(\{f\ge t\})}.$$
\end{remark}

\medskip

Due to the observation in Remark~\ref{rem:collection}, from now on, we shall deal with the connection between $\mu_t(\{f\ge t\})$ and $ \bmu_{\bpA}(f,t)$ only for $\bpA=\{\sA_t(\cdot|E)\colon E\in \Sigma,\,t\ge 0\}$.
First, note that from Proposition~\ref{pro:2.2} we have $\sA_t(0_X|E)=0$ for any $E\in \Sigma^0$ and any $t,$ so using Proposition~\ref{prop:3.6}\,(a) we get
$\bmub[\bpA](0_X,t)=\mu_0(X)\mI{}(t=0).$ Therefore, $\mu_t(\{0_X\ge t\})=\bmub[\bpA](0_X,t)$ for any $t\ge 0$ and any $\bmu\in\bMf.$ Similarly, applying Proposition~\ref{prop:3.6}\,(b), we obtain $\bmub[\bpA](f,0)=\mu_0(\{f\ge 0\})$ for any $(f,\bmu)\in \bF\times \bMf.$ To exclude these trivial cases, we put $\bFa = \bF\setminus \{0_X\}.$

\begin{proposition}\label{pro:3.4}
Let $\bpA=\{\sA_t(\cdot|E)\colon E\in \Sigma,\,t\ge 0\}.$ If $\aAi{\cdot}{\inf}\le  \aAd{\cdot}{t}$  for any $t>0$ and any $E\in\Sigma^0,$ then $\mu_t(\{f\ge t\})\le \bmu_{\bpA}(f,t)$ for any $t>0$ and any $(f,\bmu)\in\bFa\times \bMf.$
\end{proposition}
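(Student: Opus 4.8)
The plan is to reduce the claimed inequality to a containment of feasible index sets, exploiting that $\bpA$ and $\bA^{\inf}$ are built over the same constant paving $\Sigma$ and the same family $\bmu$. First I would invoke the identity \eqref{b3a}, which rewrites the level measure in supremum form as $\mu_t(\{f\ge t\})=\sup\{\mu_t(E)\colon \sA^{\inf}(f|E)\ge t,\,E\in\Sigma\}$. Since $\bpA=\{\sA_t(\cdot|E)\colon E\in\Sigma,\,t\ge 0\}$ likewise has $\cE_t=\Sigma$, formula \eqref{f:n2a} gives $\bmu_{\bpA}(f,t)=\sup\{\mu_t(E)\colon \sA_t(f|E)\ge t,\,E\in\Sigma\}$. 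Thus both sides are suprema of the same set function $\mu_t$ over subfamilies of $\Sigma$, and the comparison becomes purely a matter of comparing the two constraint regions.

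Next I would fix $t>0$ and $f\in\bFa$ and compare the feasible families
\[
S^{\inf}=\{E\in\Sigma\colon \sA^{\inf}(f|E)\ge t\},\qquad S_t=\{E\in\Sigma\colon \sA_t(f|E)\ge t\}.
\]
For $E=\emptyset$ both operators equal $\infty$ by the standing convention, so $\emptyset$ belongs to both families. For $E\in\Sigma^0$ the hypothesis $\sA^{\inf}(f|E)\le\sA_t(f|E)$ shows that $\sA^{\inf}(f|E)\ge t$ forces $\sA_t(f|E)\ge t$; hence $S^{\inf}\subseteq S_t$. Taking the supremum of $\mu_t$ over the enlarged index set then yields $\sup_{E\in S^{\inf}}\mu_t(E)\le\sup_{E\in S_t}\mu_t(E)$, which is exactly $\mu_t(\{f\ge t\})\le\bmu_{\bpA}(f,t)$.

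There is no deep obstacle here: once \eqref{b3a} is in place the argument is a one-line monotonicity of the supremum. The only point demanding care is the insistence on the full paving $\Sigma$, precisely the pitfall recorded in Remark~\ref{rem:collection}. Were the paving on the level-measure side strictly smaller than $\Sigma$, identity \eqref{b3a} would fail and the reduction would break down; so I would state at the outset that both families range over $\cE_t=\Sigma$, guaranteeing that the inclusion $S^{\inf}\subseteq S_t$ transfers directly to the two suprema. The restrictions $t>0$ and $f\in\bFa$ serve only to exclude the degenerate cases already settled in the text preceding the statement, and do not otherwise enter the argument.
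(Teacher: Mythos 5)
Your proposal is correct and follows what is evidently the paper's intended argument: the paper states Proposition~\ref{pro:3.4} without proof, treating it as an immediate consequence of the identity \eqref{b3a} combined with the monotonicity property of Proposition~\ref{pro:4.9}\,(c), which is precisely the feasible-set inclusion $\{E\in\Sigma\colon \sA^{\inf}(f|E)\ge t\}\subseteq\{E\in\Sigma\colon \sA_t(f|E)\ge t\}$ that you spell out. Your added care about the convention $\sA_t(\cdot|\emptyset)=\infty$ and about keeping the paving equal to the full $\sigma$-algebra $\Sigma$ (the pitfall of Remark~\ref{rem:collection}) is exactly the right bookkeeping, and nothing further is needed.
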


Note that the assumption of Proposition~\ref{pro:3.4} is fulfilled for any pFCA $\bpA=\{\sA_t(\cdot|E)\colon E\in \Sigma,\,t\ge 0\}$ which is  superhomogeneous of degree $1$ with $\aAd{\mI{X}}{t}\ge 1$ for $t>0,$ $E\in \Sigma^0.$ Then by~\cite[Prop. 3.8\,(b)]{BHHK21} we have $\sA^{\inf}(\cdot|E)\le \sA_t(\cdot|E)$ for 
$t>0,$ $E\in \Sigma^0.$
Further, we are interested in the following property
\begin{equation}\label{nb5a}
 \bmu_{\bpA}(f,t)=\mu_t(\{f\ge t\})\,\, \text{ for any }\: t>0 \text{ and any }  (f,\bmu)\in \bFa\times \bMf.
\end{equation}
The next result provides a~characterization for the nonparametric class of CAOs.

\begin{tw}\label{tw:3.18}
Let $\bA=\{\sA(\cdot|E)\colon E\in \Sigma\}$ be a~FCA.
\begin{enumerate}[noitemsep]
    \item[(a)] Assume that    $\Sigma=\{\emptyset,X\}.$ Then~\eqref{nb5a} with $\bpA=\bA$ holds  if and only if  $\sA(\cdot|X)$ is idempotent.

    \item[(b)] Assume that  $\Sigma\neq \{\emptyset,X\}.$
    Then \eqref{nb5a} with $\bpA=\bA$
    is true  if and only if  $\bA=\bA^{\inf}.$
\end{enumerate}
\end{tw}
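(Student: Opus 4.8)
The plan is to dispatch the three easy implications first and then concentrate on the converse of~(b), which carries all the weight. The implication $\bA=\bA^{\inf}\Rightarrow$\eqref{nb5a} in~(b) is immediate: it is precisely \eqref{b3a} read with the paving $\Sigma$. For~(a), I would use that $\Sigma=\{\emptyset,X\}$ forces every $f\in\bF$ to be constant, $f=\lambda\mI{X}$. Evaluating \eqref{f:n2a}, only $\emptyset$ and $X$ compete, and since $\emptyset$ contributes $\mu_t(\emptyset)=0$, one gets $\bmub[\bA](\lambda\mI{X},t)=\mu_t(X)\,\mI{}(\sA(\lambda\mI{X}|X)\ge t)$, whereas $\mu_t(\{\lambda\mI{X}\ge t\})=\mu_t(X)\,\mI{}(\lambda\ge t)$. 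Picking any $\bmu$ with $\mu_t(X)>0$, property \eqref{nb5a} collapses to $\mI{}(\sA(\lambda\mI{X}|X)\ge t)=\mI{}(\lambda\ge t)$ for all $t,\lambda>0$; letting $t\uparrow\lambda$ and $t\downarrow\lambda$ forces $\sA(\lambda\mI{X}|X)=\lambda$, i.e. idempotency, and reading the same chain backwards gives~(a)$\Leftarrow$.

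For the substantial direction~(b)$\Rightarrow$, I assume \eqref{nb5a} and aim to prove $\sA(f|E)=\inf_{x\in E}f(x)$ for every $f\in\bF$ and every $E\in\Sigma^0$ (for $E=\emptyset$ this is the convention $\infty=\infty$, and for $f=0_X$ it is Proposition~\ref{pro:2.2}). First I specialise \eqref{nb5a} to constant families $\bmu=\mu\in\bM$, obtaining
\[
\sup\{\mu(A)\colon\sA(f|A)\ge t,\ A\in\Sigma\}=\mu(\{f\ge t\})\qquad(\mu\in\bM,\ t>0,\ f\in\bFa).
\]
The device is the family of monotone measures $\nu_E(B)=\mI{}(E\subseteq B)$, $E\in\Sigma^0$, which lie in $\bM$. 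Substituting $\mu=\nu_E$ turns both suprema into $\{0,1\}$-valued indicators and yields the equivalence
\[
\big(\exists\,A\in\Sigma,\ A\supseteq E,\ \sA(f|A)\ge t\big)\iff E\subseteq\{f\ge t\}.
\]
Choosing $A=E$ as a witness shows $\sA(f|E)\ge t\Rightarrow E\subseteq\{f\ge t\}$, i.e. $\inf_{x\in E}f(x)\ge t$; letting $t\uparrow\sA(f|E)$ gives the upper bound $\sA(f|E)\le\inf_{x\in E}f(x)$ for all $f\in\bF$ and $E\in\Sigma^0$.

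The reverse inequality is the main obstacle, and here I would leverage the upper bound to eliminate every proper superset of $E$. Fix $E\in\Sigma^0$, $b>0$, and apply the displayed equivalence to $f=b\mI{E}\in\bFa$ with $0<t\le b$: as $\{b\mI{E}\ge t\}=E$, the right side holds, so some $A\supseteq E$ has $\sA(b\mI{E}|A)\ge t$. If $A\supsetneq E$, then $A$ meets $E^c$, so $\inf_{x\in A}(b\mI{E})(x)=0$ and the upper bound gives $\sA(b\mI{E}|A)\le 0<t$, a contradiction; hence $A=E$ and $\sA(b\mI{E}|E)\ge t$. Letting $t\uparrow b$ yields $\sA(b\mI{E}|E)\ge b$, so with the upper bound $\sA(b\mI{E}|E)=b$. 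For arbitrary $f\in\bF$ with $m:=\inf_{x\in E}f(x)>0$, Proposition~\ref{pro:2.2} and $(C1)$ give $\sA(f|E)=\sA(f\mI{E}|E)\ge\sA(m\mI{E}|E)=m$ (since $f\mI{E}\ge m\mI{E}$ on $E$), while for $m=0$ the upper bound already forces $\sA(f|E)=0$. Together with the upper bound this proves $\sA(f|E)=\inf_{x\in E}f(x)=\aAi{f}{\inf}$ for all $f\in\bF$, $E\in\Sigma^0$, i.e. $\bA=\bA^{\inf}$.

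The core difficulty is thus the lower bound, specifically the superset-elimination step that isolates $A=E$; everything else is bookkeeping with $(C1)$ and Proposition~\ref{pro:2.2}. I expect the hypothesis $\Sigma\neq\{\emptyset,X\}$ to serve mainly to separate~(b) from~(a): when $\Sigma=\{\emptyset,X\}$ the only nonempty conditional set is $X$ and ``$\bA=\bA^{\inf}$'' degenerates to the single scalar condition of idempotency, whereas for richer $\Sigma$ the full operator identity is genuinely needed. A point to verify at the outset is that the containment measures $\nu_E$ indeed belong to $\bM$ (monotonicity is clear, $\nu_E(\emptyset)=0$ because $E\neq\emptyset$, and $\nu_E(X)=1>0$), since the whole reduction rests on their admissibility.
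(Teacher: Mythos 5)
Your proof is correct, and for part (a) and the implication ``$\Leftarrow$'' of (b) it coincides with the paper's argument. For the main direction ``$\Rightarrow$'' of (b), your argument is organized genuinely differently, even though the raw ingredients (indicator test functions $b\mI{B}$ and containment-type test measures) are shared. The paper proceeds in the opposite order: it first proves the indicator lower bound $\sA(b\mI{B}|B)\ge b$, i.e.\ \eqref{n7}, by testing \eqref{nb5a} with a three-valued measure --- $\mu_t(B)\in(0,1)$, $\mu_t(E)=1$ when $B\subset E$, $\mu_t(E)=0$ elsewhere --- whose strictly intermediate value at $B$ forces $B$ itself (and not a proper superset) to realize the supremum; from this it derives $\sA(f|E)\ge\sA^{\inf}(f|E)$ for all $f\in\bFa$ via Proposition~\ref{pro:2.2} and $(C1)$; and only at the end does it prove the reverse inequality, by contradiction, testing with exactly your containment measures $\mI{}(D\subseteq\cdot)$. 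You invert this: the containment measures $\nu_E$ alone yield your equivalence, hence the upper bound $\sA(f|E)\le\sA^{\inf}(f|E)$ directly and without any reductio, and you then reuse that upper bound to eliminate proper supersets as witnesses for $f=b\mI{E}$, which is precisely the role the paper's three-valued measure plays. The trade-off: your route is more economical (a single family of $\{0,1\}$-valued test measures and no proof by contradiction), whereas in the paper's route the lower-bound step \eqref{n7} is self-contained and does not lean on the upper bound, so the two inequalities are obtained independently of each other. Both proofs finish identically, passing from indicators to general $f$ with Proposition~\ref{pro:2.2} and $(C1)$.
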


\begin{proof}
\noindent (a)
Since $\Sigma=\{\emptyset,X\},$ we have $\bFa=\{b\mI{X}\colon b>0\}.$
Observe that
\[\bmu_{\bA}(b\mI{X},t)
=\begin{cases}
 0 &\text{if } t>\aA[X]{b\mI{X}},\\
 \mu_t(X) &\text{if } t\le \aA[X]{b\mI{X}}
\end{cases}\]
for any $t,b>0,$ as $\sA(b\mI{X}|\emptyset)=\infty.$
Clearly,  the condition \eqref{nb5a} is equivalent to the idempotency of $\sA(\cdot|X)$ as $\mu_t(X)>0$ for all $t$.

\noindent (b) The implication ,,$\Leftarrow$'' is obvious (see \eqref{b3a}).\\
,,$\Rightarrow$''
Let $B\in\Sigma^0,$ $b\in (0,\infty)$ and $f=b\mI{B}.$
From \eqref{f:n2a} with $\cE=\Sigma$ and \eqref{nb5a}
we have
\begin{align}\label{n6}
    \bmu_{\bA}(f,b)=\sup\{\mu_b(E)\colon \aA{b\mI{B}}\ge b,\,E\in \Sigma\}=\mu_b(B)\; \text{ for any } \bmu\in\bMf.
\end{align}
If $B=X$, put $\mu_t(E)=\mI{}(E=X)$ for all $t,E.$ If $B\neq X,$ take $\mu_t$ such that $\mu_t(B)\in (0,1)$, $\mu_t(E)=1$ for all $E$ such that $B\subset E$ and $\mu_t(E)=0$ elsewhere for each $t.$ From  \eqref{n6} and arbitrariness of $b$ and $B,$ we obtain
\begin{align}\label{n7}
  \sA(b\mI{B}|B)\ge b \quad \text{for any } b>0 \text{ and  any } B\in \Sigma^0.
\end{align}
Moreover, by Proposition~\ref{pro:2.2}, (C1) and \eqref{n7}, we get
\begin{align*}
    \aA{f}=\aA{f\mI{E}}\ge \aA{(\inf_{x\in E}f(x))\mI{E}} \ge \sA^{\inf}(f|E)
\end{align*}
for any $f\in\bFa$ and any $E\in\Sigma^0.$ We will prove  the reverse inequality  by a~contradiction.
Suppose that there exist  $f\in\bFa$ and $D\in\Sigma^0$ such that $\aA[D]{f}>\sA^{\inf}(f|D).$
Put $t_0=\aA[D]{f}$ and let $\bmu=(\mu_t)$ be any family of monotone measures such that  $\mu_{t_0}(E)=1$ if $D\subseteq E$ and $\mu_{t_0}(E)=0$ otherwise. Then
\begin{align*}
   \bmu_{\bA}(f,t_0)&= \sup\{\mu_{t_0}(E)\colon \aA{f}\ge t_0,\,E\in\Sigma\}=1,\\
    \mu_{t_0}(\{f\ge t_0\})&=\sup\{\mu_{t_0}(E)\colon \sA^{\inf}(f|E)\ge t_0,\,E\in\Sigma\}=0,
\end{align*}
 as $\sA^{\inf}(f|E)\le \sA^{\inf}(f|D)$ for $D\subseteq E$, contradicting \eqref{nb5a} with $\bpA=\bA$.
Hence, $\bA=\bA^{\inf},$ as expected.
\end{proof}

Note that the implications ,,$\Rightarrow$'' in points (a) and (b) of Theorem~\ref{tw:3.18}  do not hold for a~pFCA $\bpA$ when replacing the condition ``for any  $(f,\bmu)\in \bFa\times\bMf$'' in \eqref{nb5a} by ``for any  $(f,\bmu)\in \bFam\times \bMf$'', where $\bFam=\{f\in \bFa\colon {\tx \sup_{x\in X} f(x)\le M}\}$ for some $M>0.$

\begin{example}\label{ex:3.17}
Let $M>0.$ Consider the pFCA $\bpA = \{\sA_t(\cdot|E)\colon E\in \Sigma,\,t\ge 0\}$ such that $\sA_t=0.5\,\sA^{\inf}$ whenever $t> M$, and $\sA_t = \sA^{\inf}$, otherwise. It is clear that
\begin{align}\label{n19}
    \bmu_{\bpA}(f,t)=\mu_t(\{f\ge t\})
\end{align}
holds for any $t\in (0,M]$ and any $(f,\bmu)\in \bFam\times \bMf.$  Clearly, $\mu_t(\{f\ge t\})=0$ for any $t>M$ and any $(f,\bmu)\in \bFam\times \bMf.$
Suppose that there exists $(f,\bmu)\in \bFam\times \bMf$ such that $\bmu_{\bpA}(f,t) >0.$ Hence there exists $E\in\Sigma^0$  such that $\sA_t(f|E)\ge t$ and $\mu_t(E)>0.$ Then $M\mI{X}\ge f$ and $0.5M=0.5\,\sA^{\inf}(M\mI{X}|E)\ge 0.5\,\sA^{\inf}(f|E)=\sA_t(f|E)\ge t>M,$ a~contradiction.
To sum up, the equality \eqref{n19} is true for any $t\in (0,\infty)$ and any $(f,\bmu)\in \bFam\times \bMf,$
but $\bpA\neq \bA^{\inf}.$
\end{example}

\medskip

Theorem~\ref{tw:3.18} results that in order to get the equality \eqref{nb5a}
for families of CAOs $\bA$, only the family $\bA^{\inf}$ can be used.
This highlights the role of the level measure in the developed theory. However, for certain families of monotone measures one can show that the generalized level measure  still may end up in the level measure~\eqref{b3a}.
For instance, consider  $\bpA^{\operatorname*{ess\,inf}_{\bmu}}=\{\sA^{\operatorname*{ess\,inf}_{\mu_t}}(\cdot|E)\colon E\in \Sigma,\,\mu_t\in\bM,\,t\ge 0\}$ with
\begin{align*}
   \aAi{f}{\operatorname*{ess\,inf}_{\mu_t}}=\operatorname*{ess\,inf}\nolimits_{\mu_t} (f\mI{E})=\sup\{a\in  [0,\infty)\colon \{f\mI{E}<a\}\in\mathcal{N}_{\mu_t}\}
\end{align*}
being the essential infimum of $f\in\bF$ on $E$ w.r.t. $\mu_t.$
Recall that $\mathcal{N}_\mu$ is the set of all \textit{null sets} w.r.t. $\mu\in\bM$, i.e.,  $N\in\mathcal{N}_\mu$ if $N\in \Sigma$ and $\mu(E\cup N) = \mu(E)$ for each $E\in \Sigma$ (see \cite[Def.~2.107]{grabisch2016} and \cite[Thm. 2.108\,(vii)]{grabisch2016}).

\begin{proposition}
     Let $\bmu\in\bMf$ be such that $\mu_t$ is continuous from below for any $t>0.$
Then for any $t>0$  and any 
$f\in\bFa$ we have
$$
\bmub[\bpA^{\operatorname*{ess\,inf}_{\bmu}}](f,t)=\mu_t(\{f\ge t\}).
$$
\end{proposition}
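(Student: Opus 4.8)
The plan is to show the two-sided inequality separately. For the direction $\bmub[\bpA^{\operatorname*{ess\,inf}_{\bmu}}](f,t)\le \mu_t(\{f\ge t\})$, I would exploit the fact that whenever $E\in\Sigma^0$ satisfies $\sA^{\operatorname*{ess\,inf}_{\mu_t}}(f|E)\ge t$, the set $\{f\mI{E}<t\}$ is a~$\mu_t$-null set. The essential infimum being at least $t$ means that $f\ge t$ holds $\mu_t$-almost everywhere on $E$, so $E\setminus\{f\ge t\}\subseteq \{f\mI{E}<t\}\in \mathcal{N}_{\mu_t}$, and consequently $\mu_t(E)=\mu_t\big(E\cap\{f\ge t\}\big)\le \mu_t(\{f\ge t\})$ using the null-set property $\mu_t(B\cup N)=\mu_t(B)$. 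Taking the supremum over all admissible $E$ yields the bound. I expect this direction to be essentially routine once the null-set manipulation is set up carefully, the only subtlety being the case where the supremum defining $\sA^{\operatorname*{ess\,inf}_{\mu_t}}(f|E)\ge t$ is not attained, which forces working with $\{f\mI{E}<a\}$ for $a$ slightly below $t$ and passing to a~limit.

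For the reverse inequality $\mu_t(\{f\ge t\})\le \bmub[\bpA^{\operatorname*{ess\,inf}_{\bmu}}](f,t)$, the natural candidate conditional set is $E=\{f\ge t\}$ itself. First I would check that this $E$ is admissible, i.e., that $\sA^{\operatorname*{ess\,inf}_{\mu_t}}(f|\{f\ge t\})\ge t$. On $E=\{f\ge t\}$ we have $f\mI{E}\ge t$ pointwise on $E$, hence $\{f\mI{E}<a\}\subseteq E^c$ for every $a\le t$; the issue is whether $E^c=\{f<t\}$ is a~$\mu_t$-null set, which need not hold. So the direct choice $E=\{f\ge t\}$ does not immediately give essential infimum $\ge t$ unless the complement is null. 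This is where I expect the main obstacle to lie, and where the continuity-from-below hypothesis must enter.

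The remedy is to approximate from below: for each $n$ consider $E_n=\{f\ge t\}$ but evaluate the essential infimum at level $t-1/n$, or more precisely observe that $\sA^{\operatorname*{ess\,inf}_{\mu_t}}(f|\{f\ge t\})=\operatorname*{ess\,inf}_{\mu_t}(f\mI{\{f\ge t\}})\ge t$ does hold, since $f\mI{E}\ge t\,\mI{E}$ and on $E^c$ the function $f\mI{E}$ equals $0<t$, so $\{f\mI{E}<a\}=E^c$ for $t<a$ is not forced to be null — but for $a\le t$ we get $\{f\mI{E}<a\}\subseteq\{f<a\}\cap E^c$, and taking the supremum over $a\in[0,t)$ of those $a$ for which this set is null is exactly $\ge t$ provided the sets $\{f<a\}\cap\{f\ge t\}=\emptyset$. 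Indeed for $a\le t$ one has $\{f\mI{E}<a\}\cap E=\emptyset$, so $\{f\mI{E}<a\}=\{f\mI{E}<a\}\cap E^c\subseteq E^c$; this shows the essential infimum over $E$ is at least $t$ only after confirming $\emptyset\in\mathcal{N}_{\mu_t}$, which is immediate. Hence $E=\{f\ge t\}$ \emph{is} admissible, giving $\mu_t(\{f\ge t\})\le \bmub[\bpA^{\operatorname*{ess\,inf}_{\bmu}}](f,t)$ directly, and the role of continuity from below is to secure the first (upper-bound) direction when the defining supremum is approached through a~monotone sequence of null sets whose union must remain null. Combining the two inequalities gives equality, and I would close by noting that the restriction to $f\in\bFa$ and $t>0$ simply avoids the degenerate evaluations already handled in Proposition~\ref{prop:3.6}.
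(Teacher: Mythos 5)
Your upper-bound half is correct and is essentially the paper's own argument: for an admissible $E$ (i.e., $\sA^{\operatorname*{ess\,inf}_{\mu_t}}(f|E)\ge t$), the definition of the essential infimum only guarantees that $E\cap\{f<a\}$ is $\mu_t$-null for each $a<t$; one then takes the increasing sequence $N_n=E\cap\{f<t-1/n\}$, uses continuity from below to conclude that $N=\bigcup_{n} N_n=E\cap\{f<t\}$ is null, and finally $\mu_t(E)=\mu_t(E\setminus N)\le \mu_t(\{f\ge t\})$, after which one takes the supremum over admissible $E$. You correctly flag the non-attainment subtlety and you correctly locate the role of continuity from below in this direction; this matches the paper.

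The gap is in your lower bound. Choosing the witness $E=\{f\ge t\}$ is the right idea, but your verification of its admissibility is broken. If $\{f\mI{E}<a\}$ is read literally as a subset of $X$, then for every $0<a\le t$ one has $\{f\mI{E}<a\}=E^c$ exactly (on $E$ we have $f\mI{E}=f\ge t\ge a$, while on $E^c$ we have $f\mI{E}=0<a$). Your claimed inclusion $\{f\mI{E}<a\}\subseteq \{f<a\}\cap E^c$ is false (take $x\in E^c$ with $a\le f(x)<t$), and your closing step --- that admissibility follows ``after confirming $\emptyset\in\mathcal{N}_{\mu_t}$'' --- is a non sequitur: the set whose nullity is needed is $E^c=\{f<t\}$, not $\emptyset$, and that is precisely the obstacle you yourself identified one paragraph earlier and never actually removed. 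Indeed, under the literal reading the proposition is simply false: any $E$ with $\sA^{\operatorname*{ess\,inf}_{\mu_t}}(f|E)\ge t>0$ would need a null complement, so for $X=\{1,2\}$, $\mu_t$ the uniform additive measure, $f=\mI{\{1\}}$ and $t=1$, the only admissible set is $\emptyset$ and the generalized level measure equals $0$, whereas $\mu_t(\{f\ge 1\})=1/2$. The resolution --- and the reading the paper actually uses --- is that $\sA^{\operatorname*{ess\,inf}_{\mu_t}}(f|E)$ is the essential infimum of $f$ \emph{on} $E$, i.e., the relevant sets are $E\cap\{f<a\}$; this is visible both in the paper's identity $\inf_{x\in E}f(x)=\sup\{a\in[0,\infty)\colon \{f\mI{E}<a\}=\emptyset\}$ and in its proof, where only the sets $N_n=E\cap\{f<t-1/n\}$ occur. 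With that reading your witness works in one line: $E\cap\{f<a\}=\emptyset\in\mathcal{N}_{\mu_t}$ for all $a\le t$, so $\{f\ge t\}$ is admissible and $\mu_t(\{f\ge t\})\le \bmub[\bpA^{\operatorname*{ess\,inf}_{\bmu}}](f,t)$. The paper obtains this same inequality slightly more abstractly, by noting $\sA^{\inf}(\cdot|E)\le \sA^{\operatorname*{ess\,inf}_{\mu_t}}(\cdot|E)$ for all $E$ and invoking Proposition~\ref{pro:3.4}; your direct-witness route is equivalent once the admissibility check is done correctly.
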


\begin{proof}
Clearly, $\sA^{\inf}(\cdot|E)\le \sA^{\operatorname*{ess\,inf}_{\mu_t}}(\cdot|E)$ for any $t$ and any $E\in\Sigma,$ as ${\tx \inf_{x\in E}f(x)}=\sup\{a\in  [0,\infty)\colon \{f\mI{E}<a\}=\emptyset\}.$ Then by Proposition~\ref{pro:3.4} we get
\begin{align}\label{b1}
   \mu_t(\{f\ge t\})\le  \bmub[\bpA^{\operatorname*{ess\,inf}_{\bmu}}](f,t)
\end{align}
for any $t>0.$
Now we shall show the validity of the reverse inequality. For a~fixed $t>0$, let us consider a~set $E\in\Sigma$ such that $\sA^{\operatorname*{ess\,inf}_{\mu_t}}(f|E)\ge t$.
Put $N_n=E\cap\{f<t-1/n\}$ for $n\in\mathbb{N}$.
Obviously, $N_n\subseteq N_{n+1}$.
From the definition of essential infimum and \cite[Thm. 2.108\,(iii)]{grabisch2016}, we have $N_n\in\mathcal{N}_{\mu_t}$ for any $n\in \mathbb{N}$, so we put $N=\tx{\bigcup_{n\in\mN} N_n}=\tx{ E\cap \bigcup_{n\in\mN} \{f<t-1/n\}= E\cap \{f<t\}}.$
By continuity from below of $\bmu$ and \cite[Thm. 2.108\,(v)]{grabisch2016}, we have $N\in\mathcal{N}_{\mu_t}$.
Furthermore,
\begin{align*}
    E\setminus N=E\cap \{f\ge t\}=(E\cap \{f\ge t\})\setminus N\subseteq \{f\ge t\}\setminus N.
\end{align*}
By \cite[Thm.~2.108\,(vi)]{grabisch2016} and monotonicity of $\mu_t,$ we have
\begin{align}\label{n8}
    \mu_t(E)=\mu_t(E\setminus N)\le \mu_t(\{f\ge t\}\setminus N)=\mu_t(\{f\ge t\}).
\end{align}
Since the inequality \eqref{n8} is satisfied for any set $E\in\Sigma$ with $\sA^{\operatorname*{ess\,inf}_{\bmu_t}}(f|E)\ge t$, we obtain
\begin{align}\label{b2}
 \sup\{\mu_t(E)\colon\sA^{\operatorname*{ess\,inf}_{\bmu_t}}(f|E)\ge t, E\in\Sigma\}\le \mu_t(\{f\ge t\}).
\end{align}
Combining inequalities \eqref{b1} and \eqref{b2} and from arbitrariness of $t,$ we get the result.
\end{proof}

\medskip
We have provided some conditions under which the equality $\bmub[\bpA](f,t)=\mu_t(\{f\ge t\})$ is true for all $t,f$. However, in general, these two objects are different in the sense that
it is not true that for each $(f,\bmu)\in\bF\times \bMf$ and each~pFCA $\bpA$ there exist $g\in\bF$ and $m_t\in\bM$ such that
$$
\bmub[\bpA](f,t)= m_t(\{g\ge t\})
$$
for every $t>0$.
This is confirmed in the following example.

\begin{example} \label{ex:4.8}
Let $X=[3]$ and $\Sigma=2^X.$ Consider a constant $\bmu\in\bMf$ defined as follows
$$\mu(\emptyset)<\mu(\{1\})<\mu(\{2\})<\mu(\{3\})<\mu(\{1,2\})<\mu(\{1,3\})<\mu(\{2,3\})<\mu(X).$$
For the function $f=1/2\cdot\mI{\{1\}}+1/3\cdot\mI{\{2\}}+1/4\cdot\mI{\{3\}}$ and the FCA $\bA^{\textrm{prod}}$ with $\cE=\Sigma$ (see p.~\pageref{ref:prod}), the generalized level measure $\mub[\bA^{\textrm{prod}}](f,a)$ takes
all eight possible values 
when varying $a\in (0,\infty)$  as given in Table~\ref{table}.

\begin{table}[h]
 \renewcommand*{\arraystretch}{1.7}
 \begin{center}
 \begin{tabular}{|c|c|c|c|c|c|c|c|c|}
 \hline
 $a$ & $\big[0,\tfrac{1}{24}\big]$ & $\big(\tfrac{1}{24},\tfrac{1}{12}\big]$ & $\big(\tfrac{1}{12},\tfrac{1}{8}\big]$ & $\big(\tfrac{1}{8},\tfrac{1}{6}\big]$ & $\big(\tfrac{1}{6}, \tfrac{1}{4}\big]$ & $\big(\tfrac{1}{4}, \tfrac{1}{3}\big]$ & $\big(\tfrac{1}{3}, \tfrac{1}{2}\big]$ & $\big(\tfrac{1}{2},\infty\big)$ \\ \hline
 $\mub[\bA^{\textrm{prod}}](f,a)$ & $\mu(X)$  & $\mu(\{2,3\})$ & $\mu(\{1,3\})$ & $\mu(\{1,2\})$ & $\mu(\{3\})$ & $\mu(\{2\})$ & $\mu(\{1\})$ & $0$\\\hline
 \end{tabular}
 \caption{Values of $\mub[\bA^{\textrm{prod}}](f,a)$ for function $f$ from Example~\ref{ex:4.8}.}
 \label{table}
 \end{center}
 \end{table}

\noi In contrary, varying $a\in (0,\infty)$ in $m_a(\{g\ge a\})$
for any function $g$ on $X$ and any
$(m_a)_{a\ge 0}\in \bMf$ one can obtain at most four different values.
\medskip
\end{example}

\subsection{\textbf{Connection with the generalized survival function}}\label{sec:3.3}

In this section we describe the relationship between the generalized level measure \eqref{f:n2} and the generalized survival function introduced in \cite[formula (12)]{BHHK21}. We first recall the latter concept.

\begin{definition}
Let $\widehat{\bpeA}=\{\widehat{\sA}_t(\cdot|E)\colon E\in \cE_t,\,t\ge 0\}$  be a~pFCA such that $\widehat{\sA}_t(\cdot|\emptyset)=0$ for any $t.$ The~\textit{generalized survival function of $f\in\bF$} w.r.t. $\widehat{\bpeA}$ and $\bmu\in \bMf$ is defined as
\begin{align}\label{n:gsf}
    \bmu_{\widehat{\bpeA}}^{S}(f,t)=\inf\big\{\mu_t(E^c)\colon \widehat{\sA}_t(f|E)\le t,\, E\in \cE_t\big\}, \quad t\ge 0.
\end{align}
\end{definition}

It is worth mentioning that the generalized survival function \eqref{n:gsf} is well defined as $E^c\in \Sigma$ for any $E\in\mathcal{E}_t$ and $t$, and the set
$\{E\in \cE_t\colon \widehat{\sA}_t(f|E)\le t\}$ is nonempty for all $t,$ since $\widehat{\sA}_t(\cdot|\emptyset)=0$  and $\emptyset\in \cE_t.$
The generalized survival function extends the concept of survival function $\mu(\{f>t\}),$ where $\mu\in\bM.$
Indeed, setting a~constant $\bmu$ and $\widehat{\bpeA}=\bA^{\sup}$  with $\sA^{\sup}(\cdot|\emptyset)=0$
in \eqref{n:gsf} we get the survival function (for more details, see \cite{BHHK21}).

For the benefit of the reader we summarize the main differences of the generalized level measure and generalized survival function:
\begin{itemize}[noitemsep]
    \item both notions are based on the pFCA  $\bpeA=\{\sA_t(\cdot|E)\colon E\in \cE_t,\, t\ge 0\}$, but with different conventions on the empty set;

    \item in the generalized level measure we measure the sets $E\in\cE_t$ for which $\sA_t(f|E)\ge t$, whereas in the generalized survival function we measure complements of sets $E\in\cE_t$ for which $\sA_t(f|E)\le t$;

    \item finally, in the generalized level measure we take supremum of all such values of $\mu_t(E)$, whereas in the generalized survival function we take infimum of all $\mu_t(E^c)$.
\end{itemize}
These observations suggest to describe their ``complementary'' or ``dual'' behaviour.
Assume that
$b\in (0,\infty).$
Given a~family $\bmu=(\mu_t)_{t\ge 0}$  of monotone measures such that $\mu_t(X)=\mu_0(X)<\infty$ for all $t$,
we define the \textit{dual family $\widehat{\bmu}=(\widehat{\mu}_t)_{t\ge 0}$
to $\bmu$}  by
\begin{align}\label{MK1}
\widehat{\mu}_t(E)=\mu_0(X)-\mu_{(b-t)_+}(E^c),\quad t\ge 0,\,E\in\Sigma,
\end{align}
where $a_+=\max\{a,0\}.$
It is clear that $\widehat{\bmu}\in\bMf$ with $\widehat{\mu}_t(X)=\mu_0(X)$ for any $t.$ Let $\bpA=\{\sA_t(\cdot|E)\colon E\in \cE,\, t\ge 0\}$ be a~pFCA such that $\sA_t(b\mI{X}|E)=b$ for any $t$ and any $E\in \cE^0.$ Define the \textit{dual pFCA to} $\bpA$ by
$\widehat{\bpA}=\{\widehat{\sA}_t(\cdot|E)\colon E\in \cE,\,t\ge 0\}$ with
$\widehat{\sA}_t(f|E)= b-\sA_{(b-t)_+}\big((b\mI{X}-f)_+|E\big)$
and $\widehat{\sA}_t(\cdot|\emptyset)=0.$
Clearly,  $\widehat{\sA}_t(b\mI{X}|E)=\sA_t(b\mI{X}|E)=b$  for any $t.$
Put  $f\in \bFb=\{f\in\bF\colon \textstyle{\sup_{x\in X} f(x)}\le b\}.$
Then, for each $(f,\bmu)\in \bFb\times \bMf$ and $t\in [0,b]$ we get
\begin{align*}
\bmub[\bpA](f,t)&=\sup\{\widehat{\mu}_0(X)-\widehat{\mu}_{b-t}(E^c)\colon \aAd{f}{t}\ge t,\,E\in \cE\}
\\&=\widehat{\mu}_0(X)-\inf\{\widehat{\mu}_{b-t}(E^c)\colon \aAd{f}{t}\ge t,\,E\in \cE\}
\\&=\widehat{\mu}_0(X)-\inf\{\widehat{\mu}_{b-t}(E^c)\colon \widehat{\sA}_{b-t}(b\mI{X}-f|E)\le b- t,\,E\in \cE\}
\\&=\mu_0(X)-\widehat{\bmu}^{S}_{\widehat{\bpA}}(b\mI{X}-f,b-t).
\end{align*}
To sum up, we get the following relationship between the generalized level measure and the generalized survival function.

\begin{proposition} Let $b\in(0,\infty)$, $\bmu=(\mu_t)_{t\ge 0}$ with $\mu_t(X)=\mu_0(X)<\infty$ for all $t$, and   $\widehat{\bmu}=(\widehat{\mu}_t)_{t\ge 0}$ be the dual family to $\bmu$ given by \eqref{MK1}.
If  $\widehat{\bpA}$ is the dual pFCA to a~pFCA $\bpA$  and $\sA_t(b\mI{X}|E)=b$ for any $t\ge 0$ and any $E\in \cE^0$, then
\begin{align}\label{MK2}
\bmub[\bpA](f,t)=\mu_0(X)-\widehat{\bmu}^{S}_{\widehat{\bpA}}(b\mI{X}-f,b-t)
\end{align}
for all $t\in [0,b]$ and all $f\in \bFb.$
\end{proposition}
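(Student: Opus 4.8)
The plan is to run the substitution displayed just before the statement, showing that after replacing each $\mu_t(E)$ by a complementary value of the dual family and rewriting the constraint through the dual pFCA, the supremum defining $\bmub[\bpA](f,t)$ becomes exactly $\mu_0(X)$ minus the infimum defining the generalized survival function. First I would unwind \eqref{f:n2} for the constant process of pavings, $\bmub[\bpA](f,t)=\sup\{\mu_t(E)\colon\aAd{f}{t}\ge t,\,E\in\cE\}$, and express each $\mu_t(E)$ via \eqref{MK1}. Inverting \eqref{MK1} on the range $t\in[0,b]$, where $(b-t)_+=b-t$, gives $\widehat{\mu}_{b-t}(E^c)=\mu_0(X)-\mu_t(E)$ after swapping $E$ and $E^c$, so $\mu_t(E)=\widehat{\mu}_0(X)-\widehat{\mu}_{b-t}(E^c)$, using $\widehat{\mu}_0(X)=\mu_0(X)$. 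Pulling the constant out of the supremum turns it into $\widehat{\mu}_0(X)-\inf\{\widehat{\mu}_{b-t}(E^c)\colon\ldots\}$.

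The decisive step is to identify the two index sets, i.e.\ to prove that $\aAd{f}{t}\ge t$ holds if and only if $\widehat{\sA}_{b-t}(b\mI{X}-f|E)\le b-t$. For $E\in\cE^0$ I would substitute the parameter $b-t$ and the argument $b\mI{X}-f$ into the definition of the dual pFCA: one computes $(b-(b-t))_+=t$ and $(b\mI{X}-(b\mI{X}-f))_+=f$ --- here the hypothesis $f\in\bFb$ is used twice, to guarantee $b\mI{X}-f\in\bF$ and to drop the outer truncation since $f\ge 0$ --- which yields $\widehat{\sA}_{b-t}(b\mI{X}-f|E)=b-\aAd{f}{t}$. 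Consequently $\widehat{\sA}_{b-t}(b\mI{X}-f|E)\le b-t$ is equivalent to $\aAd{f}{t}\ge t$, so the two admissible families of sets coincide on $\cE^0$.

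It remains to handle $E=\emptyset$, where the conventions for the two notions differ ($\aAd[\emptyset]{f}{t}=\infty$ versus $\widehat{\sA}_{b-t}(\cdot|\emptyset)=0$). Here the level-measure constraint $\infty\ge t$ and the survival-function constraint $0\le b-t$ (valid because $t\le b$) are both satisfied, and the contributions agree since $\mu_t(\emptyset)=0=\widehat{\mu}_0(X)-\widehat{\mu}_{b-t}(X)$; hence $\emptyset$ may be kept in both index sets without affecting the extremum. Recognising the resulting quantity as $\mu_0(X)-\widehat{\bmu}^{S}_{\widehat{\bpA}}(b\mI{X}-f,b-t)$ through \eqref{n:gsf} then establishes \eqref{MK2}.

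I expect the only genuine obstacle to be the bookkeeping in the constraint-translation step: one must keep $t\in[0,b]$ everywhere so that each truncation $(\cdot)_+$ collapses as intended, and invoke $f\in\bFb$ exactly where $b\mI{X}-f$ is required to be a nonnegative element of $\bF$. Once the constraint sets are shown to coincide, the rest is the formal rewriting of a supremum as $\mu_0(X)$ minus an infimum over complements.
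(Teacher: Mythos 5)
Your proposal is correct and follows essentially the same route as the paper: the paper's proof is exactly the four-line rewriting of the supremum as $\widehat{\mu}_0(X)$ minus an infimum over complements, with the constraint $\aAd{f}{t}\ge t$ translated into $\widehat{\sA}_{b-t}(b\mI{X}-f|E)\le b-t$ via the computation $(b-(b-t))_+=t$ and $(b\mI{X}-(b\mI{X}-f))_+=f$. Your explicit check that $E=\emptyset$ lies in both constraint sets (using $t\le b$) and contributes zero on both sides is a detail the paper leaves implicit, but it does not change the argument.
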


\begin{example}
Let $\mu _t(E)=\tx{\sup_{x\in E}\pi_t(x)}$ be the possibility measure, where $\pi_t$ is a~nonnegative function on $X$ and $\tx{\sup_{x\in X}\pi_t(x)=1}$ for all $t.$  Put $b=1$ and $\sA_t(f|E)=\mathbb{E}_\mathsf{P}(f^{t}|E),$ where $\mathbb{E}_{\mathsf{P}}(\cdot|E)$ denotes the conditional expected value w.r.t. to a~probability measure $\mathsf{P}$.
Then \eqref{MK2} holds true for all $t\in (0,1)$ with $\widehat{\sA}_t\big(f|E)=\mathbb{E}_{\mathsf{P}}(\mI{X}-(\mI{X}-f)^{1-t}|E)$ for $f\in\bF_1$ and $\widehat{\mu}_t(E)=\nu_{1-t}(E),$ where $\nu_t$ is the necessity measure defined by $\nu_t(E)=
{\tx \inf_{x\in E^c}\{\mI{X}-\pi_t(x)\}}$ for any $E\in \Sigma.$
\end{example}

\section{Applications}\label{sec:4}

This section provides applications of the generalized level measure in scientometrics (with a~framework for unifying many existing indices and introducing new ones) and in transformations of a~monotone measure to hyperset.

\subsection{\textbf{Scientometrics}}
Let $X=\mN,$ $\Sigma=2^\mN,$  $\bA$ be a~FCA with $\cE=\{[n]\colon n\in\mN\}\cup \{\emptyset\}$  and  $\mu$ be the counting measure\footnote{
Note that the inclusion of emptyset into collection is a~technical assumption for the generalized indices to be well defined.}.
Assume that $f\colon \mN\to \mN\cup\{0\}$ is a~\textit{scientific record} i.e., $f(1)\ge f(2)\ge \ldots$
(cf. Example~\ref{ex:2.5}).
Consider the following parametric families of CAOs
\begin{align*}
    \bA_{1}=\big\{\tx{\frac{g(\aA{f})}{|E|}}\colon E\in\cE\big\}\quad \text{and}\quad
    \bA_2=\{g(\aA{f})\colon E\in\cE\},
\end{align*}
where $g$ is a~nondecreasing function such that $g(0)=0,$  $g(\infty)=\infty$ and $\sA\in\bA.$
Hereafter $x/0=\infty$ for any $x\in [0,\infty].$
Then the generalized level measures w.r.t. above parametric families of CAOs take the following forms
     \begin{align}
        \mub[\bA_1](f,a)&=\sup\big\{|E|\colon g(\aA{f})\ge a|E|,\, E\in \cE\big\},\label{index:n1}\\
         \mub[\bA_2](f,a)&=\sup\big\{|E|\colon g(\aA{f})\ge a,\, E\in \cE\big\}\label{index:n2}
    \end{align}
for any $a\ge 0,$ respectively. These two generalized level measures provide new scientometrics indices.  Below we present several special cases known in the literature.
\begin{enumerate}[noitemsep]
\item   Let $\bA=\bA^{\inf}.$ Then \eqref{index:n1} can be rewritten as follows
\begin{align}\label{index:n3}       \mub[\bA_1^{\text{inf}}](f,a)&=\max\{k\in \mN\colon g(f(k))\ge ak\}
\end{align}
for any $a\ge 0,$ as $\aAi{f}{\inf}=f(n)$ for $E=[n].$ If $g(x)=x,$ then the generalized level measure \eqref{index:n3} coincides with $\mathrm{h}_a$-index 
\cite[Def.\,2.4]{eck}, i.e.,
$\mub[\bA_1^{\text{inf}}](f,a)=\max\{k\in \mN\colon f(k)\ge a k\}.$
Next, for $a=1$ we get the Hirsch index, and for $a\to 0$ we have the $p$-index~\cite[Def.\,2.5]{eck} returning a~number of publications with at least one citation.
Further, considering $a=1$ and an increasing function $g$ with $g(0)=0$,
the formula~\eqref{index:n3} is the generalized Kosmulski index (cf. \cite{boczek20,deineko}). Clearly for $g(x)=\sqrt{x}$ we obtain the $\mathrm{h}(2)$-index \cite{kosmulski}.

It is worth noticing that some modifications of the Hirsch index have applications not only in scientometrics, but also in evaluating a~person’s output of Internet media. For instance $\mathrm{H}_{1000}$ is defined as the highest number $\mathrm{H}$ of videos with at least $\mathrm{H}\cdot 10^3$ views (see \cite{martinez}). Replacing $10^3$ with $10^5$ we get the index proposed by Hovden \cite{hovden} capturing both productivity and impact in a~single metric.
The scientific record  can also be understood as the number of count views of $i$th video in channel.

\item If $g(x)=\sqrt{x}$ and $\bA=\bA^{\textrm{sum}},$ then we get
    \begin{align*}
        \mub[\bA_1^\text{sum}](f,a)=\max\Big\{k\in\mN\colon \sum_{x=1}^{k} f(x)\ge (ak)^2\Big\}.
    \end{align*}
For $a=1$ we obtain $\mathrm{g}$-index \cite[Def.\,3.1]{eck}. Some modification of $\mathrm{g}$-index is also applied in information sciences similarly to Hirsch index (see point 1).

    \item Let $\aA{f}=\tx{\prod_{x\in E} (f(x))^{1/|E|}}$ for $E\in\cE^0$ and $g(x)=x.$ Then we obtain the $\mathrm{t}$-index \cite[formula~(4)]{tol}
    \begin{align*}
        \mub[\bA_1](f,1)=\max\Big\{k\in \mN\colon \prod_{x=1}^k (f(x))^{1/k}\ge k\Big\}.
    \end{align*}

     \item Let $\aA{f}=\tx{(\sum_{x\in E} 1/f(x))^{-1}}$ for $E\in\cE^0$ and $g(x)=x$. Then by \eqref{index:n2}, we get the $\mathrm{f}$-index \cite[formula~(3)]{tol}
    \begin{align*}
        \mub[\bA_2](f,1)=\max\Big\{k\in\mN\colon \Bigl(\sum_{x=1}^k 1/f(x)\Bigr)^{-1}\ge 1\Big\}.
    \end{align*}

\end{enumerate}

\subsection{\textbf{M2M-transformation on hyperset}}\label{sec:3.4}

Let $\mu\in\bM,$ $\mu(X)=1$ and $\Sigma$ be a~countable set.
Recall that $\widehat{\mu}\colon 2^{\Sigma}\to [0,1]$ is a~\textit{capacity on $2^{\Sigma}$}  if (a) $\widehat{\mu}(\emptyset)=0$; (b) $\widehat{\mu}(2^X)=1$; (c) $\widehat{\mu}(\widehat{D}_1)\le \widehat{\mu}(\widehat{D}_2)$ for all  $\widehat{D}_1\subset \widehat{D}_2\subseteq \Sigma$ (cf. \cite{YM}). The problem of transformation of $\mu$ to a~capacity
$\widehat{\mu}$ on $2^\Sigma$ (the M2M-transformation, for short)
is examined in~\cite[Sec.\,IV-V]{YM}, where the following transformations are given:
\begin{itemize}[noitemsep]
    \item  $\widehat{\mu}_1(\widehat{B})=\sup \{\mu(B)\colon B\in \widehat{B}\},$ where $\hat{B}\subseteq \Sigma$;

    \item ${\tx \widehat{\mu}_2(\widehat{B})=\sum _{B_j\in \widehat{B}}\nu(B_j)/\sum _{j=1}^{2^n}\nu(B_j)},$ where $\widehat{B}=\{B_j\}$ is a~family of all nonempty subsets of $X=[n]$ and $\nu(B)=g(\mu(B),|B|)$ with  a~function $g\colon [0,1]\times [n]\to [0,\infty)$  such that $g(x,k)>0$ for $x\in [0,1]$, $k\in [n]$.
\end{itemize}
Some another possible transformations include:
\begin{itemize}[noitemsep]
    \item  ${\tx \widehat{\mu}_3(\widehat{B})=\mu\big(\bigcup\{B_j\colon B_j\in \widehat{B}\}\big)};$
    \item ${\tx \widehat{\mu}_4(\widehat{B})=\sum _{B_j,B_i\in \widehat{B}}\mu(B_i\cup B_j)/\sum _{B_i,B_j\in \Sigma}\mu(B_i\cup B_j)}$;
    \item ${\tx \widehat{\mu}_5(\widehat{B})=1-\mu\big(\bigcap\{B_j^c\colon B_j\in \widehat{B}\}\big)}.$
\end{itemize}
Observe that $\widehat{\mu}_3(\widehat{B})=\widehat{\mu}_1(\widehat{B})$ if $\widehat{B}$ is a~chain, and $\widehat{\mu}_3(\widehat{B})={\tx \sum_{B_j\in \widehat{B}}\mu(B_j)}$ if $\widehat{B}$ is a~partition of some subset of $X$ and $\mu$ is additive.

\begin{example}
Let $X=\{x_1,x_2,\ldots,x_N\}$ denote all citizens who want to vote for one of the political parties $P_1,\ldots,P_m.$
Denote by  $B_i$ the set of voters of party $P_i$ and by $n_i$ the number of seats in parliament
for the party $P_i$ determined by some method of distribution, for example by the D'Hont method.
Denote  by $n$ the total number of seats in the parliament. A~party $P_j$ enters parliament if $|B_j|/N\ge 5\%.$  Clearly, the set function
$\mu\colon \Sigma\to [0,1],$ $\Sigma=2^X,$ defined by $\mu(B_j)=n_j/n$ for $j\in [m]$ is a~monotone measure, but the most often nonadditive.

Suppose that the parties $P_{1},\ldots,P_{k}$ have crossed the five percent threshold. Before the vote on a~bill, a~coalition $C=\{P_j\colon j\in J \subseteq\{1,\dots,k\}\}$
may be formed, where  $C$ is a~family of those parties in parliament whose leaders want to vote for the project. Any coalition $C$ determines
the family $\widehat{B}=\{B_j\colon j\in J\}$ of subsets of $X.$ Obviously, the  power of
any coalition  $C$ is given by the value of the transformation of $\mu$ given by
$\widehat{\mu}(\widehat{B})={\tx \sum _{B_j\in \widehat{B}}\mu(B_j)}$  if each envoy of any party votes as the leader of that party.
\end{example}

We propose to apply the generalized level measure to convert a~monotone measure $\mu$ from $\Sigma$ to $2^\Sigma$ as follows
\begin{align}\label{A11}
    \widehat{\mu}_6(\widehat{B})=\sup\big\{\mu(B)\colon \sA(f|B)\ge a,\, B\in \widehat{B}\cup\{\emptyset\}\}, \quad \widehat{B}\subseteq \Sigma^0
\end{align}
for a~fixed $\sA(\cdot|\cdot),$ $(f,\mu)\in \bF\times \bM$ and $a\in [0,\infty).$
Clearly, $\widehat{\mu}_6$ is a~monotone measure on $2^{\Sigma}$ provided that $\mu(X)=1$ and $\sA(f|X)\ge a.$ Moreover, $\widehat{\mu}_6\le \widehat{\mu}_1$
and $\widehat{\mu}_6=\widehat{\mu}_1$ for $a=0.$

\begin{example}
Consider a~system with components $b$ and $c,$ which can be selected from the sets $\{b_i\colon i\in [k]\}$ and $\{c_j\colon j\in [m]\},$ respectively.
Let $p(b_i)$  (resp. $p(c_j)$)  denote  the probability that the element $b_i$ (resp. $c_j$)  will not fail within a~given period.
If  $b$ and $c$  are connected in parallel and operate independently of each other, then the probability of survival of the system  is equal to $1-(1-p(b))(1-p(c))=p(b)+p(c)-p(b)p(c).$
Denote by $\pi(B)$ the price of the system $B=\{b,c\}$ and let $K$ be a~limited budget.
We say that $B$ is \textit{the best system} if it has the lowest
price among those whose probability of survival is not less than $p$, where $p$ is fixed.

Set $X=\{b_1,\ldots,b_k,c_1,\ldots,c_m\}$, $\widehat{B}=\{\{b_i,c_j\}\colon i\in [k],\,j\in [m]\},$
$\mu(B)=(1-\pi(B)/K)_+$ and $\sA(f|B)=p(b)+p(c)-p(b)p(c)$ for $B=\{b,c\},$ where $f(x)=p(x)$ for all $x\in X.$
Thus, choosing the best system comes down to calculating the value of $\widehat{\mu}_6(\widehat{B})$ given by \eqref{A11}.
An extension of this example to multicomponent systems is possible.
\end{example}

\section{Conclusion}

In this paper, we have proposed a~generalization of the level measure with its examples. We have presented some properties and applications related to scientometric indices. It turns out that the generalization of the survival function introduced in~\cite{BHHK21} cannot be used in this application, hence the need to introduce the~generalized level measure. When using Proposition~\ref{pro:3.4}, the level measure is, for some parametric families of CAOs, smaller than the generalized level measure.
From \eqref{n4} it follows that we may  give both  the lower and upper bound for the Choquet integral by means of new Choquet-like  functionals of the form
$$
\intl_0^\infty \mub^S(f,a)\md a\quad \text{and}\quad \intl_0^\infty \mub(f,a)\md a,
$$
and introduce novel classes of integrable functions $L_{\mu}^S$, cf. \cite{BHHK21}, and $L_{\mu},$ respectively. Clearly, $L_{\mu}\subseteq L_{\mu}^C\subseteq L_{\mu}^S$, where
$L^C_\mu$ is the set of Choquet integrable functions. A~detailed study and practical consequences are left for the future work.

\section*{Acknowledgement}
This work was supported by the Slovak Research and Development Agency under the contract
No. APVV-16-0337, and the grant VEGA 1/0657/22. The work is also cofinanced by internal grants vvgs-pf-2021-1782 and vvgs-pf-2022-2143. 

\end{document}